\documentclass[11pt,a4paper]{amsart}         
\usepackage[margin=3cm]{geometry}

\usepackage{amsmath,amsfonts,amssymb,amsthm}
\usepackage{mathrsfs,epsfig,epstopdf,url}
\usepackage{enumerate}
\usepackage{xcolor}
\usepackage{tikz-cd}
\usepackage[all]{xy}
\usepackage{float}
\usepackage{tikz}
\restylefloat{table}
\usepackage{multirow}
\usepackage{array}

\usepackage{orcidlink}
\usepackage{cleveref}

\theoremstyle{definition}
\newtheorem{definition}{Definition}[section]
\newtheorem{example}[definition]{Example}
\theoremstyle{plain}
\newtheorem{theorem}{Theorem}
\newtheorem{lemma}[definition]{Lemma}
\newtheorem{proposition}[definition]{Proposition}

\newtheorem{corollary}[definition]{Corollary}
\newtheorem{remark}[definition]{Remark}
 
\newcommand{\App}{\mathcal{A}^{++}}
\newcommand{\sn}[1]{\|#1\|_s}
\newcommand{\bigsn}[1]{\left\|#1\right\|_s}
\DeclareMathOperator{\spec}{spec}
\DeclareMathOperator{\rad}{rad}

\newcommand{\A}{\ensuremath{\mathcal{A}}}
\newcommand{\F}{\mathbb{F}}

\newcommand{\noproof}{\hfil\qed}

\usepackage[%
backend=bibtex,%
style=numeric-comp,%
sorting=nyt,%
hyperref,%
]{biblatex}

\AtBeginBibliography{}

\renewbibmacro{in:}{%
\ifentrytype{article}
{}
{\bibstring{in}%
\printunit{\intitlepunct}}}

\title[Molnár's Theorem for split $C^*$-by-nilpotent algebras]{Molnár's Theorem for split $C^*$-by-nilpotent algebras}

\author[M.~Galici]{Mario Galici~\orcidlink{0000-0003-2721-4095}}
\author[G.~La Rosa]{Gianmarco La Rosa~\orcidlink{0000-0003-1047-5993}}
\author[M.~Mancini]{Manuel Mancini~\orcidlink{0000-0003-2142-6193}}
\author[G.~P.~Nagy]{Gábor P.~Nagy~\orcidlink{0000-0002-9558-4197}}

\email{mario.galici@unisalento.it}
\email{gianmarco.larosa@unipa.it}
\email{manuel.mancini@uclouvain.be; manuel.mancini@unipa.it}
\email{nagyg@math.u-szeged.hu}

\address[M.~Galici]{Dipartimento di Matematica e Fisica “Ennio de Giorgi”, Università del Salento, via per Arnesano, 73100, Lecce, Italy}

\address[G.~La Rosa, M.~Mancini]{Dipartimento di Matematica e Informatica, Università degli Studi di Palermo, via Archirafi 34, 90123 Palermo, Italy}

\address[M.~Mancini]{Institut de Recherche en Mathématique et Physique, Université catholique de Louvain, chemin du cyclotron 2 bte L7.01.02, B--1348 Louvain-la-Neuve, Belgium.}

\address[G.~P.~Nagy]{Bolyai Institute, University of Szeged, Aradi vértanúk tere 1, H-6720 Szeged, Hungary.}

\keywords{$C^*$-algebra, Kubo-Ando mean, Lie algebra, nilpotent algebra, semi-norm}
\subjclass[2020]{16N40, 16P10, 17B20, 17B30, 26E60, 43A07, 46L05}

\begin{document}

\begin{abstract}
Kubo-Ando means are a rich class of important binary operations on the set of positive elements of an operator algebra. The aim of this article is to understand the algebraic structure of these operations. We generalise a recent result of L.~Molnár for $C^*$-algebras to the class of \emph{split $C^*$-by-nilpotent algebras} by using the method of analytic local expansion of binary operations. Eventually, we prove that in any $C^*$-algebra $\A$, the identity $[x,[y,z]]=0$ implies commutativity.
\end{abstract}

\maketitle

\section{Introduction}

Let $\mathcal{A}$ be a unital $C^*$-algebra and let $\mathcal{A}^+$ and $\mathcal{A}^{++}$ denote respectively the set of non-negative and positive elements of $\mathcal{A}$. It is well known that $\mathcal{A}^{++}$ is closed under addition, inverse, and taking square root: for any $x\in \mathcal{A}^{++}$ there is a unique $y\in \mathcal{A}^{++}$ such that $x=y^2$. This implies that the binary operations
\begin{align}
x\circ y&=\frac{1}{\sqrt{x}}\left(x+\sqrt{\sqrt{x}y\sqrt{x}}\right)^2\frac{1}{\sqrt{x}} \label{eq:circdef} \\
x \# y &= \sqrt{x} \left(\frac{1}{\sqrt{x}}y\frac{1}{\sqrt{x}}\right)^\frac{1}{2} \sqrt{x} \label{eq:sharpdef}\\
x * y &= \sqrt{x}y\sqrt{x} \label{eq:stardef}
\end{align}
are well-defined on $\App$. For positive elements $x,y$, $x\# y$ is called the \emph{Kubo-Ando geometric mean,} and $\frac{1}{4}(x\circ y)$ is the \emph{Bures-Wasserstein mean}. These are examples from a larger class of means that play an important role in the theory of $C^*$-algebras, but also in quantum information theory and the theory of optimal transport (see~\cite{Bhatia_2018,MR3958140,MR3992484,MR4091410,molnar2021} and the references therein).

At first sight, the algebraic properties of these operations are far from being obvious. The commutativity of \eqref{eq:sharpdef} is easy, when you know that $a\# b$ is a solution of the equation 
\begin{align} \label{eq:riccati}
xa^{-1}x=b. 
\end{align}
Indeed, \eqref{eq:sharpdef} and \eqref{eq:stardef} are closely related to the binary operation
\begin{align} \label{eq:core}
x \boxplus y = xy^{-1}x,
\end{align}
which can be defined in any abstract group. In fact, \eqref{eq:core} is useful even in algebraic structures with only a weak form of associativity, like octonions, Moufang and Bol quasigroups, etc. (see~\cite{MR0093552} for the definition of the \emph{core} of a Moufang quasigroup). The unique solvability of \eqref{eq:riccati} for positive operators is referred to as the Anderson-Trapp theorem. However, the reference~\cite{AndersonTrapp} is very hard to find, and this is more of a mathematical folklore result. The operations \eqref{eq:stardef} and \eqref{eq:core} are also fundamental in the theory of finite groups; they led to the celebrated $Z^*$-theorem by G.~Glauberman. In today's terminology (by M.~Aschbacher), the link between positive operators and abstract finite groups is given by the concept of \emph{twisted subgroups.} These are subsets $S$ of a group $G$ such that $1\in S$ and $ab^{-1}a\in S$ for all $a,b\in S$. In particular, $a^2\in S$ for $a\in S$. For instance, the set $\mathcal{A}^{++}$ is a twisted subgroup of the group of invertible elements of a $C^*$-algebra $\mathcal{A}$. Another important example is the set $\{g\in G \mid \alpha(g)=g^{-1}\}$ of anti-fixed elements of a group automorphism $\alpha$ of order $2$. 

We claim that \eqref{eq:riccati} has a unique solution in $S$ if and only if $S$ is \emph{$2$-divisible}, i.e., the map $a\mapsto a^2$ is a bijection $S \to S$. This result is also folklore, see~\cite[Theorem 6.14]{MR1899153} for historical details. The necessity of the condition is obvious. For the sufficiency, let $a^{\frac{1}{2}}$ denote the unique element of $S$ such that $(a^{\frac{1}{2}})^2=a$. Then, the operation $a\# b$ is well-defined in $S$ by \eqref{eq:sharpdef}, and $a\# b$ is the unique solution of $xa^{-1}x=b$ in $S$. Indeed, it is straightforward to see that $a\# b$ is a solution. For the uniqueness, assume $xa^{-1}x=ya^{-1}y$. This implies
\[
(a^{-\frac{1}{2}}xa^{-\frac{1}{2}})^2=a^{-\frac{1}{2}}xa^{-1}xa^{-\frac{1}{2}} =a^{-\frac{1}{2}}ya^{-1}ya^{-\frac{1}{2}} = (a^{-\frac{1}{2}}ya^{-\frac{1}{2}})^2,
\]
and $a^{-\frac{1}{2}}xa^{-\frac{1}{2}}=a^{-\frac{1}{2}}ya^{-\frac{1}{2}}$. Hence, $x=y$. 

By a recent result of L.~Molnár~\cite{molnar2021}, given a $C^*$-algebra $\mathcal{A}$, the operation \eqref{eq:circdef} is commutative on $\mathcal{A}^{++}$, and the groupoid $(\mathcal{A}^{++},\circ)$ is a semigroup if and only if $\mathcal{A}$ is commutative. L.~Molnár also gave a complete description of the maps that preserve \eqref{eq:circdef}. On one hand, this sheds light on the algebraic properties of \eqref{eq:circdef}. On the other hand, L.~Molnár's proofs make heavy use of the toolbox of $C^*$-algebras, particularly the Löwner order on the set of self-adjoint elements. 

The purpose of the present paper is to extend L.~Molnár's commutativity and associativity results~\cite[Proposition~2]{molnar2021} to the class of split \emph{$C^*$-by-nilpotent algebras} (see \Cref{def_split}). From now on, we denote by $[x,y]=xy-yx$ the usual Lie bracket.

\begin{theorem} \label{thm:main}
Let $B=\mathcal{A}\oplus N$ be a split $C^*$-by-nilpotent algebra. The operation \eqref{eq:circdef} is well-defined and commutative on $B^{++}$. Moreover, $(B^{++},\circ)$ is a semigroup if and only if $[[x,y],z]=0$ holds for all $x,y,z\in B$. 
\end{theorem}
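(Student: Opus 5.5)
We will use that $B=\mathcal{A}\oplus N$, with $\mathcal{A}$ a unital $C^*$-subalgebra and $N$ a nilpotent two-sided ideal, carries the involution obtained by extending that of $\mathcal{A}$ so that $N$ is $*$-invariant. We will define $B^{++}=\{a+n : a\in\App,\ n\in N,\ n=n^*\}$ as in \Cref{def_split}. The first step is well-definedness. Every $a+n\in B^{++}$ is invertible, with inverse $a^{-1}\sum_k(-na^{-1})^k$, a finite sum. Square roots are obtained by analytic local expansion: writing $x=a+n$, we set $\sqrt{x}=\sqrt{a}+\sum_{k\ge1}D^k\sqrt{\cdot}\,(a)[n,\dots,n]/k!$, a finite sum since $N^m=0$. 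Equivalently, we solve $y^2=x$ with $y=\sqrt a+m$ degree by degree in the filtration $N\supset N^2\supset\cdots$. At each stage this requires solving a Sylvester equation $\sqrt a\, m_k+m_k\sqrt a=r_k$. Since $\sqrt a$ is positive invertible, this equation is uniquely solvable, with $m_k=\int_0^\infty e^{-t\sqrt a}r_ke^{-t\sqrt a}\,dt$, and the solution is selfadjoint whenever $r_k$ is. This gives existence and uniqueness of positive square roots in $B^{++}$. Closure under $*$ and under sums then shows that \eqref{eq:circdef} is well defined on $B^{++}$.

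For commutativity, we will avoid order arguments and instead use the identity behind Molnár's proof. For $x,y$ in $B^{++}$, put $u=\sqrt{\sqrt x y\sqrt x}$. Then $x\circ y=x+y+x^{-1/2}u\sqrt x+\sqrt x u x^{-1/2}$. We need the cross term $x^{-1/2}u\sqrt x+\sqrt xux^{-1/2}$ to be symmetric in $x$ and $y$; this cross term is twice the mean $x\#y$ after algebraic manipulation. Concretely, $w:=x^{-1/2}u\sqrt x$ satisfies $w^2=x^{-1/2}u^2\sqrt x=yx$, and $w$ is the unique square root of $yx$ similar to a positive element. Likewise $w^*=\sqrt x u x^{-1/2}$ is the square root of $xy$. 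Swapping the roles of $x$ and $y$ produces the square root of $xy$ and of $yx$ of the same type. Uniqueness of such square roots, proved by the same degree-by-degree Sylvester argument conjugated by $\sqrt x$, then yields $x\circ y=y\circ x$. A cleaner alternative is to check that the cross term equals $x\#(x^{-1}\#y)$-type expressions directly; we will use whichever is shorter.

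For the semigroup characterization, we plan to reduce to known results. Suppose first that $[[x,y],z]=0$ for all $x,y,z$. Then $B$ is Lie nilpotent of class $2$, and over the $C^*$ part this forces $\mathcal{A}$ to be commutative: by the final result announced in the abstract, $[x,[y,z]]=0$ in a $C^*$-algebra implies commutativity. Moreover $[B,B]\subseteq N$ is central. One then verifies associativity by expansion. When commutators are central, $\sqrt x$, $u$ and the other functional-calculus expressions can be written via Baker–Campbell–Hausdorff-type truncated formulas. Since all of them are power series in commuting variables plus central corrections, $x\circ y$ reduces to $(\sqrt x+\sqrt y)^2$ with central correction terms, and both bracketings of $x\circ y\circ z$ agree. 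Conversely, assume $\circ$ is associative. We will use the analytic local expansion: substitute $x=1+sX$, $y=1+tY$, $z=1+rZ$ with selfadjoint $X,Y,Z$, and expand $(x\circ y)\circ z-x\circ(y\circ z)$ as a power series in $s,t,r$. The lowest-order nonvanishing mixed term should be a nonzero multiple of $[[X,Y],Z]$, or of a combination such as $[[X,Z],Y]$ from which $[[X,Y],Z]$ can be extracted by symmetrization. Vanishing for all selfadjoint $X,Y,Z$ and complex-linearity, using $B=B_{sa}+iB_{sa}$, then give the identity on all of $B$.

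The main obstacle will be this third-order Taylor computation. Computing the second-order expansion of $\sqrt{\cdot}$ around $1$ is routine, since $D\sqrt{\cdot}(1)[h]=h/2$. The third-order coefficient in $s t r$, however, involves the second derivative of the square root at non-identity points, and cancellations must leave exactly a double commutator. We will organize the computation by first expanding $x\circ y$ to second order in mixed terms, which should give $4+2(sX+tY)+\tfrac{1}{4}$-type symmetric products plus a $[X,Y]^2$-free remainder. We will then repeat this with $z$, and take the antisymmetric part under $X\leftrightarrow Z$, which by commutativity is exactly the associator.
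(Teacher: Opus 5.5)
\textbf{The semigroup characterisation is not actually proved in either direction.}

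For ``if'', you say that $x\circ y$ is $(\sqrt x+\sqrt y)^2$ ``with central correction terms'' and that ``both bracketings agree''. That is exactly what must be shown, and surviving corrections would block associativity. A BCH computation does produce such terms piece by piece. With $x=e^X$, $y=e^Y$, the cross terms of $x\circ y$ are $e^{(X+Y)/2\pm\frac14[X,Y]}$, while those of $(\sqrt x+\sqrt y)^2$ are $e^{(X+Y)/2\pm\frac18[X,Y]}$. The two sums coincide only because the identity also forces $[X,Y]^2=0$.

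The missing idea is the pair of ring identities $ab^2a-ba^2b=[[a,b],b]a-b[[b,a],a]$ and $a^2b^2a^2-(aba)^2=a(ab^2a-ba^2b)a$. With $a=x^{1/4}$, $b=y^{1/2}$ they give $\sqrt{\sqrt x\,y\sqrt x}=x^{1/4}y^{1/2}x^{1/4}$, hence $x\circ y=(\sqrt x+\sqrt y)^2$ exactly. Then $\sqrt{x\circ y}=\sqrt x+\sqrt y$ and associativity is immediate; there is no need to make $\mathcal{A}$ commutative first.

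For ``only if'', your three-parameter expansion is a genuine alternative to the paper's, and it does work. Take $x=1+sX$, $y=1+tY$, $z=1+rZ$. Using \eqref{eq:prodrule} and \Cref{lm:shatlow}, the coefficient of $str$ in $(x\circ y)\circ z-x\circ(y\circ z)$ is $\frac{3}{32}[[Z,X],Y]$. This yields the trilinear identity directly and bypasses \Cref{lm:jacobi}. The paper needs that lemma because it puts $1$ in the middle factor and so only sees $\frac{1}{32}([[x,y],y]-[[y,x],x])$.

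However, you stop at ``should be''. You also lack what makes such an expansion legitimate in $B$, where $N$ carries no norm. Near $1$ one has $x(x^{-1}\#y)=s(xy-1)$ and $(x^{-1}\#y)x=s(yx-1)$, so $\circ$ is an explicit series in the scalar function $s$ (\Cref{lm:prodrule}). The coefficient can then be extracted separately for $X,Y,Z\in\mathcal{A}$ (by analyticity) and for $X,Y,Z\in N$ (by polynomial dependence on the parameters). This separation is allowed because $[[Z,X],Y]$ splits along $\mathcal{A}\oplus N$.

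\textbf{The first part is set in the wrong framework.} \Cref{def_split} gives $B$ no involution, and $B^{++}=\{a+n : a\in\App,\ n\in N\}$ with $n$ arbitrary. With selfadjoint $n$ you would be proving a statement about a different set, and nothing may rest on $w^*$.

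What you should use instead is axiom (d), which you never invoke. Because $\mathcal{A}$ and $N$ commute, $\sqrt{a+n}=\sqrt a\,s(a^{-1}n)$ is a finite sum. Your Sylvester equation is then just $2\sqrt a\,m_k=r_k$, and no integral (which would need a topology on $N$) is involved.

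In the commutativity step, the two square roots of $yx$ that must be identified are $x^{-1/2}ux^{1/2}$ and $y^{1/2}u'y^{-1/2}$, with $u'=\sqrt{\sqrt y\,x\sqrt y}$. They are conjugates of positive elements by different elements. So ``conjugating by $\sqrt x$'' does not reduce the question to uniqueness of positive square roots. The paper sidesteps this: $x^{-1}\#y$ is the unique $Z\in B^{++}$ with $ZxZ=y$, hence $x^{-1}\#y=(y^{-1}\#x)^{-1}$. This interchanges the two cross terms in \eqref{eq:circbysharp}. Note also that this cross term is $x(x^{-1}\#y)+(x^{-1}\#y)x$, which is not $2\,x\#y$ in general.
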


Concerning the first statement of the Theorem, the authors of~\cite{MR3958140} noticed that for positive definite matrices $A,B$, $AB$ has only positive eigenvalues and, therefore, it has a unique square root $(AB)^\frac{1}{2}$ with positive eigenvalues. Since this is also true for the matrices~$A$ and $A^{-1}\# B$, we have $(AB)^\frac{1}{2}=A(A^{-1}\# B)$ and
\[
A\circ B=A+B+(AB)^\frac{1}{2}+(BA)^\frac{1}{2}.
\]
Thus, $A\circ B$ is commutative for positive definite matrices $A,B$.

The commutativity of \eqref{eq:circdef} is shown for $C^*$-algebras in~\cite{molnar2021} by more elaborate algebraic arguments, using only the fact that, in any $C^*$-algebra, $X=A^{-1}\# B$ is the unique positive definite solution of the equation $XAX=B$. As we have seen above, this is true in any twisted subgroup which is $2$-divisible. 

We also show that a unital $C^*$-algebra satisfying the identity $[[x,y],z]=0$ is necessarily commutative. As the class of split $C^*$-by-nilpotent algebras contains all unital $C^*$-algebras, L.~Molnár's commutativity and associativity results follow. Our method is motivated by the theory of analytic quasigroups, their local expansions and infinitesimal behavior (see~\cite{MR0262413,MR1125815} and the survey paper~\cite{MR1780879}).

\section{Split $C^*$-by-nilpotent algebras}

In this section, we define split $C^*$-by-nilpotent algebras. The motivation is the set $S$ of upper triangular $3\times 3$ matrices of the form
\begin{align} \label{eq:upper-triangular}
\begin{pmatrix}
a & c_{1} & c_{2} \\
0 & a & c_{3} \\
0 & 0 & a 
\end{pmatrix}, 
\end{align}
where $a \in \mathbb{R}^+$ and $c_1,c_2,c_3 \in \mathbb{C}$. Since $S$ is closed under addition, multiplication and taking square root, the midpoint operations \eqref{eq:circdef}, \eqref{eq:sharpdef}, \eqref{eq:stardef} are well-defined in $S$. However, the algebraic properties of $(S,\circ)$ are different from those of structures whose underlying set is formed by the positive elements of a $C^*$-algebra. For example, while $S$ is not commutative, for any $X,Y\in S$, we have
\[
X\circ Y = (\sqrt{X}+\sqrt{Y})^2,
\]
which implies that $(S,\circ)$ is a semigroup.

From now on, every $C^*$-algebra is supposed to be unital. This is not restrictive, since any non-unital $C^*$-algebra $\A$ admits a canonical \emph{unitisation} $\A_1 = \mathbb{C} \ltimes \A$ in which $\A$ embeds as a two-sided ideal~\cite{Rørdam_Larsen_Laustsen_2000}.

\begin{definition}\label{def_split}
Let $B$ be a unital associative algebra over $\mathbb{C}$. We say that $B$ is a \emph{split $C^*$-by-nilpotent} algebra if the following conditions hold:
\begin{enumerate}[(a)]
\item $B$ has a unital subalgebra $\mathcal{A}$ that is a $C^*$-algebra and $1_\A=1_B$.
\item $B$ has an ideal $N$ which consists of nilpotent elements;
\item $B=\A + N$; \label{direct sum}
\item The elements of $\mathcal{A}$ and $N$ commute.
\end{enumerate}
\end{definition}

\begin{remark}\label{remsplit}{\ }
\begin{enumerate}[(i)]
\item $\A \cap N =0$, since $\A \cap N$ is a commutative $C^*$-algebra consisting of nilpotent elements. Thus, $B=\A \oplus N$ as vector spaces.
\item $\A \cong B/N$ as associative algebras: indeed, the map $\pi_1 \colon b=a+n \mapsto a$ defines a unital associative algebra epimorphism $B \to \A$ with kernel $N$.
\item The diagram
\begin{equation*}\label{diag:SplV}
\begin{tikzcd}
N \arrow[r, "i"]
&B \arrow[r, shift left, "\pi_1"]
&\A \arrow[l, shift left, "j"]
\end{tikzcd}
\end{equation*}
where $i$ and $j$ are the canonical inclusions, is a split extension in the variety of associative algebras. Indeed, as shown above, $\pi_1$ is an epimorphism with kernel $N$. Moreover, since $\A$ is a subalgebra of $B$, the inclusion $j$ is a homomorphism of associative algebras such that $\pi_1\circ j=\operatorname{id}_{\A}$.
\item $N$ is the \emph{Jacobson radical} $\rad(B)$ of $B$. Indeed, $N\subseteq \rad(B)$ and $\rad(\mathcal{A})=0$ (see~\cite[Corollary 3.2.13]{Dales}).
\item The decomposition $B=\A \oplus N$ is unique: if $B=\A \oplus N = \A' \oplus N'$, then $N=\rad(B)=N'$ and $\A \cong B/N =B/ \rad(B) =B/N' \cong \A'$ as associative algebras. Thus, the $C^*$-algebra structure of $\A$ is uniquely determined.
\end{enumerate}
\end{remark}

We have two rich classes of examples for split $C^*$-by-nilpotent algebras.
\begin{example}{\ }
\begin{enumerate}[(i)]
\item Every $C^*$-algebra $\A$ can be seen as a split $C^*$-by-nilpotent algebra $\A=\A \oplus 0$.
\item Let $\mathcal{A}$ be a commutative $C^*$-algebra and let $N_0$ be a nilpotent $\mathbb{C}$-algebra. Define $N_1=\mathbb{C}\cdot 1 \oplus N_0$ and $B=\mathcal{A} \otimes_\mathbb{C} N_1$. Then $\mathcal{A}\otimes_\mathbb{C} 1$ is a subalgebra isomorphic to~$\mathcal{A}$, $\mathcal{A}\otimes_\mathbb{C} N_0$ is nilpotent, their elements commute and their direct sum is isomorphic to~$B$. 
\item Consider the subalgebra $B_3$ of $\operatorname{M}_3(\mathbb C)$ defined by
\[
B_3=
\left\{
\begin{pmatrix}
a & c_1 & c_2 \\
0 & a & c_3 \\
0 & 0 & a
\end{pmatrix}
\; \Bigg\vert \; a,c_1,c_2,c_3\in \mathbb C
\right\}.
\]
Then $B_3$ is a split $C^*$-by-nilpotent algebra. Indeed, set $\mathcal{A}=\mathbb{C} I_3$ and
\[
N=
\left\{
\begin{pmatrix}
0 & c_1 & c_2\\
0 & 0 & c_3\\
0 & 0 & 0
\end{pmatrix}
\; \Bigg\vert \; c_1,c_2,c_3\in \mathbb C
\right\}.
\]
Then, $\mathcal{A}$ is a unital subalgebra of $B_3$, which is a $C^*$-algebra isomorphic to
$\mathbb{C}$, and $1_{\mathcal{A}}=I_3=1_{B_3}$. Moreover, $N$ is an ideal of $B_3$ which consists of nilpotent elements. More precisely, $N^3=0$. Finally, every element of $B_3$ can be written uniquely as
\[
\begin{pmatrix}
a & c_1 & c_2\\
0 & a & c_3\\
0 & 0 & a
\end{pmatrix}
=
aI_3+
\begin{pmatrix}
0 & c_1 & c_2\\
0 & 0 & c_3\\
0 & 0 & 0
\end{pmatrix},
\]
so that $B_3=\mathcal{A} \oplus N$. Since $\mathcal A$ consists of scalar matrices, every element of $\mathcal A$ commutes with every element of $N$. Therefore, $B_3$ satisfies all the conditions in~\Cref{def_split}.
\end{enumerate}
\end{example}

From now on, when we speak of a split $ C^*$-by-nilpotent algebra $B=\mathcal{A}\oplus N$, then we mean that $\mathcal{A}$ is a unital $C^*$-algebra, $N$ consists of nilpotent elements, and $1=1_B=1_\A$. If we write an element $x\in B$ as $x=a+n$, then $a\in \mathcal{A}$ and $n\in N$.

We define the set $B^{+}$ of \emph{non-negative} elements and the set $B^{++}$ of \emph{positive} elements of a split $C^*$-by-nilpotent algebra $B=\mathcal{A}\oplus N$ as
\begin{align*}
B^+ &= \{a+n \in B \mid a\in \mathcal{A}^+, n\in N\},\\ 
B^{++} &= \{a+n \in B \mid a\in \mathcal{A}^{++}, n\in N\}.
\end{align*}
Furthermore, we denote by $1+N$ the set $\mathcal{U}(B)$ of invertible elements of $B$. It is easy to show that $1+N=\mathcal{U}(\mathcal{A}) \oplus N$. Indeed, if $x=a+n$ is invertible, then so is $a$. Conversely, assume that $a\in\mathcal{A}$ is invertible. Since $N$ is an ideal of $B$, we have $a^{-1}n\in N$, and there exists $k \in \mathbb{N}$ such that
\[
(a^{-1}n)^k=0.
\]
Hence $1+a^{-1}n$ is invertible, with inverse
\[
(1+a^{-1}n)^{-1}
=
\sum_{j=0}^{k-1}(-1)^j(a^{-1}n)^j.
\]
Indeed, if $x=a^{-1}n$, one has
\[
(1+x)\sum_{j=0}^{k-1}(-1)^j x^j
=
\sum_{j=0}^{k-1}(-1)^j x^j
+
\sum_{j=1}^{k}(-1)^{j-1}x^j
=
1+(-1)^{k-1}x^k
=
1.
\]
Similarly,
\[
\left(\sum_{j=0}^{k-1}(-1)^j x^j\right)(1+x)=1.
\]
Hence, $a+n=a(1+a^{-1}n)$ is invertible. Note that $B^{++}$ is contained in $1+N$.

\begin{example}
Consider the split $C^*$-by-nilpotent algebra
\[
B_3=
\left\{
\begin{pmatrix}
a & c_1 & c_2 \\
0 & a & c_3 \\
0 & 0 & a
\end{pmatrix}
\; \Bigg\vert \; a,c_1,c_2,c_3\in \mathbb{C}
\right\}.
\]
One may check that
\[
B_3^+=\left\{
\begin{pmatrix}
a & c_1 & c_2\\
0 & a & c_3\\
0 & 0 & a
\end{pmatrix}
\; \Bigg\vert \;
a \in \mathbb{R}^+ \cup \{0\}, \ c_1,c_2,c_3 \in \mathbb{C} \right\},
\]
while
\[
B_3^{++}=\left\{
\begin{pmatrix}
a & c_1 & c_2\\
0 & a & c_3\\
0 & 0 & a
\end{pmatrix}
\; \Bigg\vert \;
a \in \mathbb{R}^+, \ c_1,c_2,c_3 \in \mathbb{C}
\right\}
\]
is the set of matrices \eqref{eq:upper-triangular}. Finally, the set of invertible elements of $B_3$ is
\[
1+N=\left\{
\begin{pmatrix}
a & c_1 & c_2\\
0 & a & c_3\\
0 & 0 & a
\end{pmatrix}
\; \Bigg\vert \;
a,c_1,c_2,c_3 \in \mathbb{C}, \ a \neq 0
\right\}.
\]
\end{example}

\begin{lemma}\label{rem_square}
Let $B=\mathcal{A}\oplus N$ be a split $C^*$-by-nilpotent algebra. 
For any $x=a+n \in B^{++}$ there exists a unique square root of $x$ in
$B^{++}$.
\end{lemma}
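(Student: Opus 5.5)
Write $x=a+n=a(1+a^{-1}n)$ with $a\in\A^{++}$ and $m:=a^{-1}n\in N$. The idea is to take the $C^*$-square root on the $\A$-part and a binomial series on the nilpotent part.

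For existence, let $s=\sqrt{a}\in\A^{++}$ be the unique positive square root in $\A$. Since $m$ is nilpotent, say $m^k=0$, the element
\[
u=\sum_{j=0}^{k-1}\binom{1/2}{j}m^j
\]
is a finite sum, and $u\in 1+N$ because $N$ is an ideal. The formal identity $\bigl(\sum_j\binom{1/2}{j}t^j\bigr)^2=1+t$ in $\mathbb{C}[[t]]$ specialises to $u^2=1+m$: only powers of the single element $m$ occur, and all terms of degree $\ge k$ vanish. Set $y=su$. The elements of $\A$ commute with those of $N$, so $s$ commutes with $u$. Hence $y^2=s^2u^2=a(1+m)=x$. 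Moreover $y=s+s(u-1)$ with $s\in\A^{++}$ and $s(u-1)\in N$, so $y\in B^{++}$.

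For uniqueness, suppose $y=b+p\in B^{++}$ with $b\in\A^{++}$, $p\in N$ and $y^2=x$. Applying the homomorphism $\pi_1$ from Remark~\ref{remsplit}(ii) gives $b^2=a$, so $b=s$ by uniqueness of positive square roots in $\A$. It remains to show that the $N$-part is determined. Write $y=s(1+q)$ with $q=s^{-1}p\in N$, using commutativity again. Then $s^2(1+q)^2=a(1+m)$, and cancelling the invertible $a$ gives $(1+q)^2=1+m$, that is, $2q+q^2=m$. Thus $q$ satisfies $q(2+q)=m$, and since $2+q$ is invertible we get $q=\tfrac12 m(1+q/2)^{-1}$. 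In particular $q$ commutes with $m$.

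The main obstacle is showing that $1+m$ has only one square root of the form $1+q$ with $q$ nilpotent. Commutativity of $q$ with $m$ is exactly what is needed here. We have $q$ and $u-1$ both in $N$, both commuting with $m$. Also $u$ is a polynomial in $m$, so $q$ commutes with $u$, and $(1+q)^2=u^2$ gives
\[
\bigl((1+q)-u\bigr)\bigl((1+q)+u\bigr)=0.
\]
The second factor is $2+(q+u-1)$, which is $2$ plus a nilpotent element: the sum of two commuting nilpotents is nilpotent. It is therefore invertible by the Neumann series argument given above for $1+N$. Hence $1+q=u$, so $y=su$, which proves uniqueness.

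Nilpotence of $q+u-1$ could alternatively be taken from $N$ being an ideal of nilpotent elements, since $q+u-1\in N$. Either route suffices.
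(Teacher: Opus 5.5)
Your proof is correct. The existence half is the same as the paper's: take $\sqrt a$ in $\A$ and multiply it by the binomial series truncated at the nilpotency index. The paper writes this as $b\sum_j\binom{1/2}{j}(b^{-2}n)^j$, which is your $su$ since $b^{-2}=a^{-1}$.

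The uniqueness half is where you differ, and your version is the more complete one. The paper only says uniqueness follows from uniqueness of the positive square root in $B/N\cong\A$. That fixes the $\A$-component $b=\sqrt a$ but says nothing about the nilpotent component. Your argument supplies this missing step. You reduce to $(1+q)^2=u^2$ with $q$ commuting with $u$. Then in the factorization $\bigl((1+q)-u\bigr)\bigl((1+q)+u\bigr)=0$ the second factor is $2$ plus an element of $N$, hence invertible, which forces $1+q=u$. The paper itself uses this device later, in \Cref{lm:sqrt}(ii) and, through the bijection $n\mapsto s(n)$, in \Cref{lm:sqrtB+}(ii)--(iii).

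One small point: $q$ commutes with $m$ simply because $m=2q+q^2$ is a polynomial in $q$. The formula $q=\tfrac12 m(1+q/2)^{-1}$ is unnecessary, and on its own it does not exhibit the commutation.
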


\begin{proof}
Let $x=a+n\in B^{++}$ and let $b=\sqrt a$. Since $a\in \mathcal{A}^{++}$,
we have $b\in \mathcal{A}^{++}$ and $b$ is invertible. Moreover, since
$n$ is nilpotent, there exists $k\in\mathbb N$ such that $n^k=0$. Since
the elements of $\mathcal{A}$ and $N$ commute, we also have
\[
(b^{-2}n)^k=0.
\]
Thus, the series
\[
(1+t)^{1/2}=\sum_{j=0}^{\infty} \binom{1/2}{j} t^j,
\]
where
\[
\binom{1/2}{0}=1 \quad \text{ and } \quad
\binom{1/2}{j}=\frac{\left(\frac12\right)\left(\frac12-1\right)\cdots\left(\frac12-j+1\right)}{j!},\quad \text{for any } j\geq 1,
\]
becomes a finite sum when evaluated at $t=b^{-2}n$. We may therefore define
\[
y=b\sum_{j=0}^{k-1}\binom{1/2}{j}(b^{-2}n)^j .
\]
Then $y\in B^{++}$, and since $b$ commutes with $n$, we get
\[
y^2=b^2
\left(
\sum_{j=0}^{k-1}\binom{1/2}{j}(b^{-2}n)^j
\right)^2
=b^2(1+b^{-2}n)=a+n=x.
\]
Hence $y$ is a square root of $x$ in $B^{++}$. The uniqueness of $y$ follows from the uniqueness of the positive square root in the quotient $B/N\cong \mathcal{A}$.
\end{proof}

\Cref{remsplit} allows us to define the notion of \emph{morphism} of split $C^*$-by-nilpotent algebras.

\begin{definition}
Let $B=\A \oplus N$ and $B'=\A' \oplus N'$ be split $C^*$-by-nilpotent algebras. A \emph{morphism} $B \to B'$ is a morphism of associative algebra split extensions
\begin{equation*}
\begin{tikzcd}
N \arrow [r, "i"] \ar[d, "f"']
&B \arrow[r, shift left, "\pi_1"] \ar[d, swap, "g"]&
\A \ar[l, shift left, "j"] \ar[d, "h"] \\
N' \arrow [r, "i'"]
&B' \arrow[r, shift left, "\pi'"] &
\A' \ar[l, shift left, "j'"] 
\end{tikzcd}
\end{equation*}
such that $h$ is a unital $^*$-homomorphism. In other words, $f$, $g$, and $h$ are homomorphisms of associative algebras, $h$ is moreover a unital $^*$-homomorphism, and the following identities hold:
\[
g\circ i=i'\circ f,\qquad
h\circ \pi_1=\pi'_1\circ g,\qquad
g\circ j=j'\circ h.
\]
\end{definition}

We observe that the morphism $g \colon B \to B'$ is uniquely determined by $f$ and $h$. In fact, any $b \in B$ can be written in a unique way as $b=a+n$, with $a \in \A$ and $n \in N$. Thus, $g(b)=g(a+n)=h(a)+f(n)$. Moreover, $g(1_B)=h(1_{\A})=1_{\A'}=1_{B'}$.

We denote by \textbf{Spl$C^*$-Nil} the category whose objects are the split $ C^*$-by-nilpotent algebras and whose morphisms are those defined above. This defines an \emph{adjunction}~\cite{MacLane} between $\textbf{Spl$C^*$-Nil}$ and the category \textbf{$C^*$Alg$_1$} of unital $C^*$-algebras: the left adjoint of the full inclusion functor $i \colon \textbf{$C^*$Alg$_1$} \to \textbf{Spl$C^*$-Nil}$ is the functor $p \colon \textbf{Spl$C^*$-Nil} \to \textbf{$C^*$Alg$_1$}$ which sends every split $C^*$-by-nilpotent algebra $B=\A \oplus N$ to the unital $C^*$-algebra $\A$, and every morphism $(f,h) \colon B \to B'$ as above to the unital $^*$-homomorphism $h \colon \A \to \A'$.

Indeed, for every object $\A \oplus N$ of $\textbf{Spl$C^*$-Nil}$ and $\A'$ of \textbf{$C^*$Alg$_1$}, and for any unital $*$-homomorphism $h \colon \A \to \A'$, the map $h \mapsto (h,0_N)$ defines a bijection
\[
\operatorname{Hom}_{C^*\textbf{Alg}_1}(\A,\A') \cong \operatorname{Hom}_{\textbf{Spl}C^*\textbf{-Nil}}(\A \oplus N, \A' \oplus 0)
\]
which is natural in both $\A \oplus N$ and $\A'$.

\medskip

We now define the notion of \emph{semi-norm} for a split $C^*$-by-nilpotent algebra. We recall the following definition.

\begin{definition}~\cite{MR1741419}
A \emph{semi-norm} on a $\mathbb{C}$-vector space $V$ is a function $p \colon V \to \mathbb{R}$ such that 
\begin{enumerate}[(a)]
\item $p(x+y) \leq p(x)+p(y)$;
\item $p(\lambda x) = |\lambda| p(x)$;
\end{enumerate}
for any $x,y\in V$ and $\lambda \in \mathbb{C}$. 
\end{definition}

For a split $C^*$-by-nilpotent algebra $B=\mathcal{A}\oplus N$, we define the semi-norm $\sn{\cdot}$ of $x=a+n\in B$ by
\[
\sn{x}=\|a\|.
\]
This semi-norm is submultiplicative as well: $\sn{xy} \leq \sn{x}\sn{y}$. Moreover, if $\sn{1-x}<1$, then $x$ is invertible.

Let $B=\mathcal{A}\oplus N$ be a split $C^*$-by-nilpotent algebra. Let $\mathbb{C}[[\tau]]$ denote the ring of formal power series over $\mathbb{C}$ in the variable $\tau$. Let $u(\tau)\in \mathbb{C}[[\tau]]$ and assume that $u(\tau)$ has convergence radius $\rho>0$ at $0$. If $x=a+n\in B$ with $\sn{x}<\rho$, then the higher derivatives $u^{(i)}(\tau)$ are absolutely convergent at $a$ and
\begin{align} \label{eq:infinitesum}
u(x)=u(a+n)=\sum_{i=0}^{M-1}\frac{u^{(i)}(a)}{i!} n^i,
\end{align}
is a well-defined element of $B$, if $M$ is the smallest positive integer such that $n^M=0$. In particular, if $u(\tau)$ is a polynomial, then $x\mapsto u(x)$ is the usual substitution. 

Let $R_\rho$ denote the set of formal power series whose convergence radius at $0$ is at least~$\rho$. The set $R_\rho$ is a commutative ring and the assignment $u \mapsto u(x)$ defines a ring homomorphism $R_\rho \to B$, for any $x\in B$ such that $\sn{x}<\rho$. This is obvious for the sum $u+v$ of formal power series. For the product $w=uv$ with $u,v \in R_\rho$, this follows from
\begin{align*}
u(a+n)v(a+n)&=\left(\sum_{i=0}^{M-1}\frac{u^{(i)}(a)}{i!} n^i\right) \left(\sum_{j=0}^{M-1}\frac{v^{(j)}(a)}{j!} n^j\right) \\
&= \sum_{k=0}^{M-1} \left(\sum_{i=0}^{k} \frac{u^{(i)}(a)v^{(k-i)}(a)}{i!(k-i)!}\right)n^k\\
&= \sum_{k=0}^{M-1}\frac{w^{(k)}(a)}{k!} n^k.
\end{align*}
Thus, a formal algebraic identity of formal power series holds after substitutions of values from $B$, with semi-norms less than $\rho$. 

\begin{lemma} \label{lm:seminorm}
Let $B=\mathcal{A}\oplus N$ be a split $C^*$-by-nilpotent algebra. Let $u(\tau)\in \mathbb{C}[[\tau]]$ be a formal power series and assume that $u(\tau)$ has convergence radius $\rho >0$ at $0$. If $x,y\in B$, $\sn{x}<\rho$, and $xy=yx$, then $u(x)$ and $y$ commute.
\end{lemma}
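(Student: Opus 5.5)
Write $x=a+n$ and $y=c+m$ with $a,c\in\A$ and $n,m\in N$. By \eqref{eq:infinitesum},
\[
u(x)=\sum_{i=0}^{M-1}\frac{u^{(i)}(a)}{i!}\,n^i ,
\]
where each $u^{(i)}(a)$ is defined in the $C^*$-algebra $\A$ by a norm-convergent power series in $a$, since $\|a\|=\sn{x}<\rho$. It is enough to show that $y$ commutes with every $u^{(i)}(a)$ and with $n$, because then $y$ commutes with each summand $u^{(i)}(a)n^i$.

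First I would split the hypothesis $xy=yx$ into its components. Elements of $\A$ commute with elements of $N$, and $N$ is an ideal, so
\[
xy-yx=(ac-ca)+(nm-mn),
\qquad ac-ca\in\A,\quad nm-mn\in N .
\]
Since $B=\A\oplus N$ by \Cref{remsplit}(i), we get $ac=ca$ and $nm=mn$.

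Next, each $u^{(i)}(a)$ is the norm limit in $\A$ of the polynomials $p_k(a)=\sum_{j\le k}\beta_j a^j$, where the $\beta_j$ are the coefficients of $u^{(i)}$. These polynomials commute with $c$ because $ac=ca$, and multiplication in $\A$ is norm-continuous, so $u^{(i)}(a)c=c\,u^{(i)}(a)$. Each $u^{(i)}(a)$ lies in $\A$, so it also commutes with $m\in N$ by condition (d) of \Cref{def_split}. Hence $u^{(i)}(a)$ commutes with $y=c+m$.

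Finally, $n$ commutes with $c$ by (d) and with $m$ by the first step, so $n$, and hence every power $n^i$, commutes with $y$. It follows that $y$ commutes with each $u^{(i)}(a)n^i$ and therefore with the finite sum $u(x)$.

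The one delicate point is the limit argument for $u^{(i)}(a)$. Continuity is only available inside $\A$, since $B$ carries just the semi-norm, so the limit must be taken in $\A$ and tested only against $c$. The nilpotent part $m$ is then handled purely algebraically by (d). Splitting $y$ into $c$ and $m$ is exactly what avoids needing any topology on $B$.
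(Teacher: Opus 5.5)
Your proof is correct and follows the same route as the paper. You split $xy=yx$ into $ac=ca$ and $nm=mn$ via the direct sum, then check that $y$ commutes with each $u^{(i)}(a)$ and each $n^i$ in the finite expansion \eqref{eq:infinitesum}; in addition, you spell out the norm-limit argument inside $\A$ for $u^{(i)}(a)c=c\,u^{(i)}(a)$, which the paper leaves implicit.
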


\begin{proof}
Let $x=a+n, y=a'+n' \in B$. If $(a+n)(a'+n')=(a'+n')(a+n)$, then $aa'=a'a$ and $nn'=n'n$. Hence, $a'$ and $n'$ commute with $u^{(i)}(a)$ and $n^i$ for any $0 \leq i \leq M-1$ respectively, which implies that $u(a+n)$ and $a'+n'$ commute, since $\sn{x} < \rho$. 
\end{proof}

\section{Midpoint operations on $B^{++}$}

In this section, we aim to show that the operations \eqref{eq:circdef}, \eqref{eq:sharpdef}, and \eqref{eq:stardef} are well-defined on the set of positive elements of a split $C^*$-by-nilpotent algebra, and \eqref{eq:circdef} is commutative. Our principal tool is the formal power series
\begin{equation} \label{eq:formalsroot}
s(\xi)=(1+\xi)^{\frac{1}{2}}=1+{\frac {1}{2}}\xi-{\frac {1}{8}}{\xi}^{2}+{\frac {1}{16}}{\xi}^{3}-{\frac {5}{128}}{\xi}^{4}+\cdots
\end{equation}
i.e., the MacLaurin series expansion of $s(x)=(1+x)^{\frac{1}{2}}$. At $\xi=0$, $s(\xi)$ has convergence radius $1$. The formal identity $s(\xi)^2=1+\xi$ implies $s(x)^2=1+x$, for any $x\in B$ such that $\sn{x}<1$. Furthermore, for commuting variables $\xi,\eta$, the formal identity $s(\xi)s(\eta)=s(\xi+\eta+\xi\eta)$ holds.

\begin{lemma} \label{lm:sqrt}
Let $B=\mathcal{A}\oplus N$ be a split $ C^*$-by-nilpotent algebra and let $s(\xi)$ be the formal power series as in \eqref{eq:formalsroot}. The following hold:
\begin{enumerate}[(i)]
\item If $x\in B$ with $\sn{x}<1$, then $\sn{1-s(x)}\leq 1-\sqrt{1-\sn{x}}$.
\item If $x,y\in B$ with $\sn{x} < 1$ and $\sn{1-y}<1$, then $y^2=1+x$ holds if and only if $y=s(x)$.
\item There exists $\varepsilon>0$ such that, for each pair of commuting elements $x,y\in B$ with $\sn{x} < \varepsilon$ and $\sn{y} < \varepsilon$, one has $s(x)s(y)=s(x+y+xy)$. 
\end{enumerate}
\end{lemma}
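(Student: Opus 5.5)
The plan is to reduce every statement to the $C^*$-algebra part $\A$, using the decomposition $x=a+n$ and the substitution formula \eqref{eq:infinitesum}, and to transfer formal power series identities through the ring homomorphism $R_\rho\to B$, $u\mapsto u(x)$.

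\emph{Step (i).} Write $x=a+n$ with $\|a\|=\sn{x}<1$. By \eqref{eq:infinitesum}, the $\A$-component of $s(x)$ is $s(a)$, since all terms with $n^i$, $i\ge 1$, lie in $N$. Hence $\sn{1-s(x)}=\|1-s(a)\|$. The coefficients of $1-s(\xi)$ other than the constant term are $-\binom{1/2}{j}$ for $j\ge1$, so
\[
\|1-s(a)\|\le\sum_{j\ge1}\left|\tbinom{1/2}{j}\right|\|a\|^j .
\]
The first coefficient $\binom{1/2}{1}=1/2$ is positive and all later ones are negative, so $\sum_{j\ge1}|\binom{1/2}{j}|t^j = 1-\sqrt{1-t}$. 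This follows because $1-\sqrt{1-t}=-\sum_{j\ge1}\binom{1/2}{j}(-t)^j$, and the sign of $\binom{1/2}{j}(-1)^{j}$ is constantly negative for $j\ge1$. Evaluating at $t=\sn{x}$ gives (i).

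\emph{Step (ii).} For the direction ``if'', $s(x)^2=1+x$ holds by the ring homomorphism property, and $\sn{1-s(x)}\le 1-\sqrt{1-\sn{x}}<1$ by (i). For ``only if'', suppose $y^2=1+x$ and $\sn{1-y}<1$. Then $y$ commutes with $1+x$, hence with $x$, so by \Cref{lm:seminorm} $y$ commutes with $z=s(x)$. This gives
\[
0=y^2-z^2=(y-z)(y+z).
\]
It remains to show that $y+z$ is invertible. Invertibility in $B$ reduces to invertibility of the $\A$-component, and
\[
\sn{1-\tfrac12(y+z)}\le \tfrac12\sn{1-y}+\tfrac12\sn{1-z}<1 .
\]
By the remark after the definition of $\sn{\cdot}$, $\tfrac12(y+z)$ is therefore invertible, and hence $y=z$. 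I expect this to be the main obstacle: one has to use commutation (via \Cref{lm:seminorm}) to factor $y^2-z^2$, and control the semi-norm to get invertibility, rather than simply appealing to uniqueness of positive square roots. The latter is not available here because $y$ need not be positive.

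\emph{Step (iii).} Choose $\varepsilon$ small, say $\varepsilon=\sqrt2-1$, so that $w=x+y+xy$ satisfies $\sn{w}\le 2\varepsilon+\varepsilon^2<1$. Since $x,y$ commute, \Cref{lm:seminorm} shows that $s(x)$ and $s(y)$ commute. Then
\[
(s(x)s(y))^2=s(x)^2s(y)^2=(1+x)(1+y)=1+w .
\]
We also have
\[
\sn{1-s(x)s(y)}\le \sn{1-s(x)}+\sn{s(x)}\,\sn{1-s(y)},
\]
and by (i) this is small, hence $<1$ after possibly shrinking $\varepsilon$. Applying (ii) with $x$ replaced by $w$ yields $s(x)s(y)=s(w)$. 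Alternatively, one could invoke the formal identity for two commuting variables directly, but the approach via (ii) avoids having to justify a two-variable substitution.
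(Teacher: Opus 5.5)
Your proposal is correct and follows the paper's proof essentially step by step. In (i) you use the same majorant bound $1-\sqrt{1-\sn{x}}$; your explicit reduction to the $\A$-component $s(a)$ justifies it more cleanly than the paper's termwise estimate in $B$. In (ii) you use the same factorisation $(y-s(x))(y+s(x))=0$, with $\tfrac12(y+s(x))$ invertible via the semi-norm, and in (iii) the same reduction to (ii) using commutativity from \Cref{lm:seminorm}. Two minor slips: the coefficients $\binom{1/2}{j}$ alternate in sign rather than being negative for $j\ge 2$, although the fact you actually use, $(-1)^j\binom{1/2}{j}<0$ for $j\ge1$, is correct; and with $\varepsilon=\sqrt2-1$ the bound in (iii) should read $\sn{w}<2\varepsilon+\varepsilon^2=1$.
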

\begin{proof}
\begin{enumerate}[(i)]
\item Let $x\in B$ with $\sn{x}<1$. Thus
\begin{align*}
\sn{1-s(x)} &= \bigsn{{\frac {1}{2}}(-x)+{\frac {1}{8}}{(-x)}^{2}+{\frac {1}{16}}{(-x)}^{3}+{\frac {5}{128}}{(-x)}^{4}+\cdots} \\
&\leq {\frac {1}{2}}\sn{-x}+{\frac {1}{8}}{\sn{-x}}^{2}+{\frac {1}{16}}{\sn{-x}}^{3}+{\frac {5}{128}}{\sn{-x}}^{4}+\cdots \\
&= 1-s(-\sn{-x}) \\
&= 1-\sqrt{1-\sn{x}}.
\end{align*}
\item Let $y\in {B}$ such that $\sn{1-y}<1$. If $y=s(x)$, then trivially $y^2=1+x$. Conversely, if $y^2=1+x$, then 
\[
\sn{1-s(x)}\leq 1-\sqrt{1-\sn{x}}<1,
\]
and
\[
\sn{2-s(x)-y} \leq \sn{1-s(x)}+\sn{1-y}<2,
\]
which implies $\sn{1-\frac{s(x)+y}{2}}<1$. Hence, $s(x)+y$ is invertible in ${B}$. As $y$ commutes with $x=y^2-1$, and $y$ commutes with $s(x)$ by \Cref{lm:seminorm}, then $y^2=s(x)^2$ implies $0=(y-s(x))(y+s(x))$. Since the second factor is invertible, we have $y-s(x)=0$. 
\item We can choose $\varepsilon>0$ \footnote{This is always true for $\varepsilon < \sqrt{2}-1$.} such that $\sn{x+y+xy}<1$ and $\sn{1-s(x)s(y)}<1$, for any $\sn{x} < \varepsilon$ and $\sn{y}<\varepsilon$. By \Cref{lm:seminorm}, $z_1=s(x)$, $z_2=s(y)$ and $z_3=s(x+y+xy)$ commute since $xy=yx$. We saw that $z_1^2=1+x$, $z_2^2=1+y$ and 
\[
z_3^2=1+x+y+xy=(1+x)(1+y)=z_1^2z_2^2=(z_1z_2)^2.
\]
Thus, $z_3=z_1z_2$ follows from (ii).  
\end{enumerate}
\end{proof}

\begin{lemma} \label{lm:sqrtB+}
Let $B=\mathcal{A}\oplus N$ be a split $C^*$-by-nilpotent algebra. For any $a\in \mathcal{A}$ and $n\in N$, the following hold:
\begin{enumerate}[(i)]
\item If $n\in N$, then $s(n)\in 1+N$ and $s(n)^2=1+n$.
\item The map $n\mapsto s(n)$ is a bijection from $N$ to $1+N$. In particular, any element $y\in 1+N$ has a unique square root $\sqrt{y}=s(y-1) \in 1+N$.
\item Every element of $B^{++}$ has a unique square root in $B^{++}$. 
\end{enumerate}
\end{lemma}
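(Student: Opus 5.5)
For (i), write $M$ for the nilpotency index of $n$. Since $\sn{n}=0<1$, formula \eqref{eq:infinitesum} with $a=0$ gives
\[
s(n)=\sum_{i=0}^{M-1}\binom{1/2}{i}n^i=1+\sum_{i=1}^{M-1}\binom{1/2}{i}n^i,
\]
and the second summand lies in $N$ because $N$ is an ideal. The homomorphism $R_1\to B$, $u\mapsto u(n)$, carries the formal identity $s(\xi)^2=1+\xi$ to $s(n)^2=1+n$.

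For (ii), I would start with injectivity. If $s(n)=s(n')$, squaring gives $1+n=1+n'$, so $n=n'$.

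For surjectivity, take $y=1+m$ with $m\in N$. I would compare $y$ with $1+r$, where $r=s(m)-1\in N$, and look for a preimage by inverting the series. There are two routes:
\begin{itemize}
\item Use the inverse formal series: $s(\xi)=1+\eta$ is solved by $\xi=2\eta+\eta^2$. This suggests the preimage of $y=1+m$ is $n=y^2-1=2m+m^2\in N$. Then $s(n)$ and $y$ both square to $1+n$.
\item Alternatively, apply \Cref{lm:sqrt}(ii) with $x=n$ (so $\sn{x}=0$) and the element $y$, which satisfies $\sn{1-y}=\sn{-m}=0<1$. This gives $y=s(n)$ directly.
\end{itemize}

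So the map is a bijection, with inverse $y\mapsto y^2-1$.

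For the "in particular" clause, take $y\in 1+N$, so $y=1+(y-1)$ with $y-1\in N$. By (i), $s(y-1)\in 1+N$ and $s(y-1)^2=y$.

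For uniqueness, suppose $z\in 1+N$ with $z^2=y$. Then $\sn{1-z}=0<1$ and $\sn{y-1}=0$, so \Cref{lm:sqrt}(ii) forces $z=s(y-1)$. Here I read $1+N$ as $1+N$ literally, i.e.\ unipotent elements, which is consistent with $s(n)\in 1+N$.

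For (iii), let $x=a+n\in B^{++}$ and set $b=\sqrt a\in\mathcal{A}^{++}$. Since $b$ is invertible and commutes with $N$, write
\[
x=b^2(1+b^{-2}n), \qquad b^{-2}n\in N.
\]
Define $y=b\,s(b^{-2}n)$. Then $y\in B^{++}$, because its $\mathcal{A}$-part is $b$. Moreover $b$ commutes with $s(b^{-2}n)$ (by \Cref{lm:seminorm}, or directly since $\mathcal{A}$ and $N$ commute), so
\[
y^2=b^2(1+b^{-2}n)=x.
\]

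For uniqueness, let $z=c+m\in B^{++}$ with $z^2=x$. Projecting by $\pi_1$ gives $c^2=a$, so $c=b$ by uniqueness of positive square roots in $\mathcal{A}$. Then $z=b(1+b^{-1}m)$, and the factors commute, so
\[
z^2=b^2(1+b^{-1}m)^2=x,
\]
which gives $(1+b^{-1}m)^2=1+b^{-2}n$. Since $1+b^{-1}m\in 1+N$, part (ii) yields $1+b^{-1}m=s(b^{-2}n)$, hence $z=y$.

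The main subtlety is this uniqueness step: one must factor out the $\mathcal{A}$-part, which requires the commuting hypothesis (d), so that the problem reduces to square roots in the unipotent group $1+N$, where (ii) applies.
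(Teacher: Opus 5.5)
Your proof is correct and is essentially the paper's. Part (i) is the finite binomial sum of \Cref{rem_square}, and part (iii) uses the same factorisation $a+n=a(1+a^{-1}n)$ and the same reduction of uniqueness to $\mathcal{A}^{++}$ and $1+N$, which you spell out more fully. In (ii) you get $y=s(y^2-1)$ from \Cref{lm:sqrt}(ii), where the paper gets $s(2n+n^2)=s(n)^2=1+n$ from \Cref{lm:sqrt}(iii); this is a cosmetic difference, though your first bullet alone would still need that uniqueness.

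Your literal reading of $1+N$ as $\{1+n : n\in N\}$ is the right one here. The paper earlier used $1+N$ for the whole unit group $\mathcal{U}(B)$, onto which $n\mapsto s(n)$ is not surjective.
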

\begin{proof}
\begin{enumerate}[(i)]
\item It is an immediate consequence of \Cref{rem_square}.
\item For any $n\in N$, $s(n^2+2n)=s(n)^2=1+n$ by (iii) of \Cref{lm:sqrt} with $x=y=n$. Hence, for $y\in 1+N$, we have $s(y^2-1)=y$, which means that the map $x\mapsto s(x)$ is surjective. The injectivity follows from
\[
s(n_1)=s(n_2) \Rightarrow 1+n_1=s(n_1)^2=s(n_2)^2=1+n_2 \Rightarrow n_1=n_2.
\]
\item Fix an element $a+n\in B^{++}$. As $a$, $\sqrt{a}$ and $n$ commute, and $a^{-1}n\in N$, we have
\[
(\sqrt{a}\, s(a^{-1}n))^2 = a\, s(a^{-1}n)^2 =a(1+a^{-1}n)=a+n.
\]
The uniqueness follows from the uniqueness of the square root in $\mathcal{A}^{++}$ and in $1+N$. 
\end{enumerate}
\end{proof}

We are now ready to prove the following.

\begin{proposition} \label{pr:B_oper_welldef}
The operations \eqref{eq:circdef}, \eqref{eq:sharpdef} and \eqref{eq:stardef} are well-defined on $B^{++}$. Furthermore, the operation \eqref{eq:circdef} is commutative on $B^{++}$. 
\end{proposition}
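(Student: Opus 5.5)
Well-definedness comes first. The plan is to show that $B^{++}$ is closed under the operations used in \eqref{eq:circdef}, \eqref{eq:sharpdef} and \eqref{eq:stardef}: products of the form $\sqrt{x}\,y\sqrt{x}$, inverses, sums, and square roots. Write $x=a+n$ and $y=a'+n'$ in $B^{++}$. By \Cref{lm:sqrtB+}(iii), $\sqrt{x}\in B^{++}$, and its $\mathcal{A}$-component is $\sqrt{a}$, since $\pi_1$ is a homomorphism. The element $x$ is invertible because $B^{++}\subseteq 1+N$ (invertibles), and $\pi_1(x^{-1})=a^{-1}\in\mathcal{A}^{++}$, so $x^{-1}\in B^{++}$. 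For the sandwich product, $\pi_1(\sqrt{x}\,y\sqrt{x})=\sqrt{a}\,a'\sqrt{a}\in\mathcal{A}^{++}$, hence $x*y\in B^{++}$. Sums are handled the same way: $\pi_1(x+z)=a+\pi_1(z)\in\mathcal{A}^{++}$. Since membership in $B^{++}$ depends only on the $\mathcal{A}$-component, and $\pi_1$ is a unital algebra homomorphism, every expression in the three formulas maps to the corresponding $C^*$-expression in $\mathcal{A}^{++}$. Each intermediate step therefore lies in $B^{++}$, where square roots exist uniquely. So $x\# y=\sqrt{x}\,\sqrt{x^{-1/2}yx^{-1/2}}\,\sqrt{x}$ and $x\circ y$ are well-defined.

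For commutativity, the plan is to follow Molnár's algebraic argument, which rests on the uniqueness of solutions of the Riccati equation. First, $B^{++}$ is a twisted subgroup of $1+N$: it contains $1$, and $ab^{-1}a\in B^{++}$ by the projection argument above. By \Cref{lm:sqrtB+}(iii) it is also $2$-divisible. The introduction shows that in such a set the equation $XA^{-1}X=C$ has the unique solution $X=A\#C$ in $B^{++}$. Equivalently, $X=A^{-1}\#C$ is the unique solution in $B^{++}$ of $XAX=C$.

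Next, expand \eqref{eq:circdef}. With $r=\sqrt{\sqrt{x}\,y\sqrt{x}}$ we get
\[
x\circ y=x^{-1/2}(x+r)^2x^{-1/2}=x+y+x^{1/2}rx^{-1/2}+x^{-1/2}rx^{1/2},
\]
using $x^{-1/2}r^2x^{-1/2}=y$. Put $X=x^{-1/2}rx^{-1/2}\in B^{++}$. Then $XxX=x^{-1/2}r^2x^{-1/2}=y$, so $X=x^{-1}\#y$. The two middle terms are $xX$ and $Xx$, giving
\[
x\circ y=x+y+x(x^{-1}\#y)+(x^{-1}\#y)x.
\]
Commutativity thus reduces to the identity
\[
x(x^{-1}\#y)+(x^{-1}\#y)x=y(y^{-1}\#x)+(y^{-1}\#x)y.
\]

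The main obstacle is this last identity. I would first try to prove $y(y^{-1}\#x)=(x^{-1}\#y)x$, together with the mirror identity obtained by swapping the roles of $x$ and $y$. Set $Z=x^{-1}\#y$ and consider $W=y^{-1}Zx$. We want $W$ to satisfy $WyW=x$ and to lie in $B^{++}$; uniqueness of the Riccati solution would then give $W=y^{-1}\#x$, hence $yW=Zx$. The equation itself follows from $ZxZ=y$:
\[
WyW=y^{-1}Zx\,Zx=y^{-1}(ZxZ)x=x.
\]
Positivity of $W$ is the delicate point. A convenient route is $W=y^{-1}Zx=Z^{-1}x^{-1}x=Z^{-1}$, which uses $y^{-1}Z=(ZxZ)^{-1}Z=Z^{-1}x^{-1}$. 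Since $Z^{-1}\in B^{++}$, we get $W\in B^{++}$. Hence $y^{-1}\#x=(x^{-1}\#y)^{-1}$ and $y(y^{-1}\#x)=yZ^{-1}=ZxZZ^{-1}=Zx$. Symmetrically, $(y^{-1}\#x)y=Z^{-1}y=xZ$. Adding these two identities gives the required equality, so $x\circ y=y\circ x$.
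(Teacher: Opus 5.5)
Your proof is correct and follows the paper's route. You use that $B^{++}$ is a $2$-divisible twisted subgroup, so the Riccati equation has unique solutions in $B^{++}$, together with the decomposition $x\circ y=x+y+x(x^{-1}\#y)+(x^{-1}\#y)x$; your derivation of $y^{-1}\#x=(x^{-1}\#y)^{-1}$, and hence $y(y^{-1}\#x)=(x^{-1}\#y)x$ and $(y^{-1}\#x)y=x(x^{-1}\#y)$, makes explicit the step the paper's sketch leaves implicit when it simply cites \eqref{8++}.
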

\begin{proof}
It follows easily from \Cref{lm:sqrtB+} that the operations \eqref{eq:circdef}, \eqref{eq:sharpdef}, and \eqref{eq:stardef} are well-defined on $B^{++}$. It remains to show that $x \circ y = y \circ x$, for any $x,y \in B^{++}$.

Since $B^{++}$ is a $2$-divisible twisted subgroup of the group of units $1+N$, and
\begin{align} \label{eq:circbysharp}
x\circ y =x+y+x(x^{-1}\# y) +(x^{-1}\# y)x,
\end{align}
with
\begin{align} \label{8++}
(x(x^{-1}\# y))^2=xy, \quad ((x^{-1}\# y)x)^2=yx,
\end{align}
holds, it follows that \eqref{eq:circdef} is commutative on $B^{++}$.
\end{proof}

\section{Commutator brackets}

The purpose of this section is to prove the ``if'' part of \Cref{thm:main} and to show that in any unital $C^*$-algebra, the identity $[[x,y],y]=0$ implies that the algebra is commutative. We start with the following lemma, whose proof is a linear algebra exercise.

\begin{lemma} \label{lm:jacobi}
Let $A$ be an associative algebra over the field $\F$, with $\mathrm{char}(\F)\neq 2,3$. Let $S$ be a vector subspace of $A$ that is closed under the ternary Lie bracket $[[-,-],-]$. Assume that $[[x,y],y]-[[y,x],x]=0$, for any $x,y\in S$. Then $[[x,y],z]=0$, for any $x,y,z\in S$. 
\end{lemma}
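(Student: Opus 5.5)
The plan is to work with the trilinear map $f(x,y,z)=[[x,y],z]$ on $S$ and to prove that it is totally alternating. Once that is known, the Jacobi identity forces $3f=0$. Only three facts about $f$ are needed. First, $f$ is trilinear. Second, $f$ is antisymmetric in its first two arguments, since $[x,y]=-[y,x]$. Third, $f$ satisfies the Jacobi identity
\[
f(x,y,z)+f(y,z,x)+f(z,x,y)=0,
\]
which holds in any associative algebra. I do not expect to need the closure of $S$ under the ternary bracket beyond making sense of the hypothesis.

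First I will rewrite the hypothesis. By antisymmetry, $[[y,x],x]=-[[x,y],x]$, so the assumption becomes
\[
f(x,y,x)+f(x,y,y)=0 \qquad \text{for all } x,y\in S.
\]
Replacing $x$ by $-x$ makes the first term, which is quadratic in $x$, keep its sign, while the second term, which is linear in $x$, changes sign. This gives $f(x,y,x)-f(x,y,y)=0$. Adding and subtracting the two relations, and using $\mathrm{char}(\F)\neq 2$, yields $f(x,y,y)=0$ for all $x,y\in S$; we also get $f(x,y,x)=0$, but it will not be needed.

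Next I will polarise $f(x,y,y)=0$ by substituting $y\mapsto y+z$. Bilinearity in the last two slots then gives $f(x,y,z)=-f(x,z,y)$. So $f$ is antisymmetric under the transposition $(2\,3)$ as well as under $(1\,2)$. These two transpositions generate $S_3$, so $f(x_{\sigma(1)},x_{\sigma(2)},x_{\sigma(3)})=\operatorname{sgn}(\sigma)f(x_1,x_2,x_3)$ for every permutation $\sigma$. In particular $f$ is invariant under cyclic permutations of its arguments.

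Finally, substituting this cyclic invariance into the Jacobi identity gives $3f(x,y,z)=0$, and $\mathrm{char}(\F)\neq 3$ gives $[[x,y],z]=0$ for all $x,y,z\in S$. The only step needing care is separating the two terms of the hypothesis. I use the substitution $x\mapsto -x$ rather than a degree argument in $\lambda x$, so that the argument also works over finite fields; it relies only on $\mathrm{char}(\F)\neq 2$. Everything else is routine multilinear bookkeeping.
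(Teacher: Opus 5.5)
Your proof is correct and follows the paper's argument: polarize to obtain $[[x,y],z]=-[[x,z],y]$, combine this with antisymmetry in the first two slots to get cyclic invariance, and substitute into the Jacobi identity to get $3[[x,y],z]=0$. The one difference is your preliminary substitution $x\mapsto -x$, which is not needed: the paper linearizes the hypothesis in $y$ directly, and the term $[[y,x],x]$ is linear in $y$, so it cancels on its own. That detour is also the only place your argument uses $\mathrm{char}(\F)\neq 2$.
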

\begin{proof}
Linearizing $[[x,y],y]-[[y,x],x]=0$ in $y$, since $\mathrm{char}(\F)\neq 2$, we get $[[x,y],z]+[[x,z],y]=0$ for any $x,y,z\in S$. Hence
\[
[[x,y],z]=[[z,x],y]=[[y,z],x].
\]
We substitute this in the Jacobi identity
\[
[[x,y],z]+[[y,z],x]+[[z,x],y]=0
\]
to obtain $3[[x,y],z]=0$, for any $x,y,z\in S$. Thus, $[[x,y],z]=0$ holds in $S$. 
\end{proof}

A similar result can be obtained for the class of split $C^*$-by-nilpotent algebras.

\begin{lemma} \label{lm:lts}
Let $B=\mathcal{A}\oplus N$ be a split $C^*$-by-nilpotent algebra. Assume that for some $\varepsilon>0$, the identity $[[x,y],y]-[[y,x],x]=0$ holds for any $x,y\in B^{++}$ such that $\sn{x}< \varepsilon$, $\sn{y}<\varepsilon$. Then $[[x,y],z]=0$, for any $x,y,z \in B$. 
\end{lemma}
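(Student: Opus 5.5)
Write $T(x,y)=[[x,y],y]-[[y,x],x]$. The hypothesis says $T(x,y)=0$ only for $x,y$ in a small region $U_\varepsilon=\{x\in B^{++}:\sn{x}<\varepsilon\}$. The plan is to use linearity and polynomiality to extend this identity from $U_\varepsilon$ to all of $B$, and then apply \Cref{lm:jacobi} with $S=B$.

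First, the map $T$ is a polynomial map in $(x,y)$ that is homogeneous of degree $3$. Hence $T(\lambda x,\lambda y)=\lambda^3T(x,y)$ for real $\lambda>0$. Moreover, $B^{++}$ is invariant under multiplication by positive reals, so if $x,y\in B^{++}$ are arbitrary, then $\lambda x,\lambda y\in U_\varepsilon$ for $\lambda>0$ small, because $\sn{\lambda x}=\lambda\sn{x}$. This shows that $T(x,y)=0$ for all $x,y\in B^{++}$.

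Second, I extend from $B^{++}$ to all of $B$. Let $H=\{a+n: a=a^*\in\mathcal{A},\ n\in N\}$. This is a real subspace, and $B=H+iH$. For $h,k\in H$, the elements $x=c\cdot 1+h$ and $y=c\cdot 1+k$ lie in $B^{++}$ once $c>\max(\|a_h\|,\|a_k\|)$, since $c+a_h\in\mathcal{A}^{++}$. The scalar $1$ is central, so it drops out of every bracket, and therefore $T(c+h,c+k)=T(h,k)$. Consequently $T(h,k)=0$ for all $h,k\in H$.

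Third, I polarize to pass from $H$ to $B$. Define the trilinear map $F(x,y,z)=[[x,y],z]+[[x,z],y]$. Then $T(x,y)=F(x,y,y)-F(y,x,x)$, but $T$ alone does not immediately give $F=0$. The way around this is to run the argument of \Cref{lm:jacobi} directly on the real vector space $H$ over $\mathbb{R}$. That argument only uses linearization over the field, which is valid over $\mathbb{R}$, together with the Jacobi identity. The one caveat is that $H$ is not closed under the ternary bracket. However, the proof of \Cref{lm:jacobi} never actually uses that closure: it only needs the identity $T(x,y)=0$ for $x,y$ ranging over a vector space. Linearizing in $y$ means replacing $y$ by $y+tz$ with real $t$; this gives $[[x,y],z]+[[x,z],y]=0$, with the $[[y,x],x]$ terms contributing only at order $t^0$ and $t^2$, as in the lemma. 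The Jacobi step then gives $3[[x,y],z]=0$. So $[[x,y],z]=0$ for all $x,y,z\in H$. Finally, $(x,y,z)\mapsto[[x,y],z]$ is complex trilinear and $B=H+iH$, so it vanishes on all of $B$.

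The main obstacle is checking the linearization step in the proof of \Cref{lm:jacobi}. Extracting the coefficient of $t$ from $T(x,y+tz)=0$ gives
\[
[[x,y],z]+[[x,z],y]-[[z,x],x]=0.
\]
This is not the same as what was claimed; the issue is that the second term of $T$ is not quadratic in $y$. I would handle this by also using the $t^3$ coefficient, or by substituting $x\mapsto sx$ to separate terms by homogeneity degree. Comparing the coefficients of $s^1t^1$ and $s^2t^1$ in $T(sx,y+tz)$ gives separately
\[
[[x,y],z]+[[x,z],y]=0 \quad\text{and}\quad [[z,x],x]=0.
\]
Both are available over $\mathbb{R}$ using real $s,t$. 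After that, the Jacobi argument finishes the proof as planned.
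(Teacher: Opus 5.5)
Your argument is correct and is essentially the paper's proof: you push the real space $H$ (the paper's $\Sigma$) into the small-semi-norm part of $B^{++}$ by scaling and a central shift (the paper does both at once via $\alpha x+\beta\cdot 1$), run the argument of \Cref{lm:jacobi} over $\mathbb{R}$, and complexify using $B=H+iH$. Two minor corrections. First, $H$ is in fact closed under $[[\cdot,\cdot],\cdot]$: the $\mathcal{A}$-component of $[[x,y],z]$ is $[[a,b],c]$, which is self-adjoint when $a,b,c$ are, so \Cref{lm:jacobi} applies verbatim. Second, its linearization step is already exact, because $T(x,y+z)-T(x,y)-T(x,z)=[[x,y],z]+[[x,z],y]$ (the second summand of $T$ is linear in $y$); your $s$-homogeneity argument is valid but not needed.
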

\begin{proof}
We denote
\[
\Sigma = \{a+n\in B \mid \text{$a$ is self-adjoint}\}
\]
and we fix arbitrary elements $x,y\in \Sigma$. Then there exist real numbers $\alpha,\beta > 0$ such that 
\[
x'=\alpha x+\beta\cdot 1, \quad y'=\alpha y+\beta\cdot 1\in B^{++}
\]
and $\sn{x'}< \varepsilon$, $\sn{y'}<\varepsilon$. By assumption and the properties of the Lie bracket, we have
\[
0=[[x',y'],y']-[[y',x'],x']=\alpha^3([[x,y],y]-[[y,x],x]).
\]
As $\Sigma$ is an $\mathbb{R}$-vector space, \Cref{lm:jacobi} implies that $[[x,y],z]=0$, for any $x,y,z\in \Sigma$. It is well known that any element of a $C^*$-algebra can be written as $a+ib$ with $a,b$ self-adjoint. In this way, any element of $B$ can be written as $x+iy$, with $x,y\in \Sigma$. This implies that $[[x,y],z]=0$, for any $x,y,z\in B$. 
\end{proof}

\begin{remark}
As a direct consequence of \Cref{lm:lts}, one may easily see that the following are equivalent.
\begin{enumerate}[(i)]
\item The identity $[[x,y],z]=0$ holds in $B$;
\item The identity $[[x,y],y]-[[y,x],x]=0$ holds in $B^{++}$;
\item There exists $\varepsilon >0$ such that $[[x,y],y]-[[y,x],x]=0$ holds for any $x,y\in B^{++}$ with $\sn{x}< \varepsilon$, $\sn{y}<\varepsilon$.
\end{enumerate}
\end{remark}

We are now ready to prove the following.

\begin{proposition} \label{pr:lts}
Let $B=\mathcal{A}\oplus N$ be a split $C^*$-by-nilpotent algebra. If the identity $[[x,y],y]-[[y,x],x]=0$ holds in $B^{++}$, then
\[
x\circ y = (\sqrt{x}+\sqrt{y})^2,
\]
for any $x,y \in B^{++}$.
\end{proposition}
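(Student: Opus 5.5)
By \Cref{lm:lts}, the hypothesis gives $[[x,y],z]=0$ for all $x,y,z\in B$. So every commutator $[x,y]$ is central in $B$, and the plan is to show that $x\circ y=(\sqrt{x}+\sqrt{y})^2$ follows from this central-commutator property. Put $u=\sqrt{x}$ and $v=\sqrt{y}$, both in $B^{++}$ by \Cref{lm:sqrtB+}. Using \eqref{eq:circbysharp} and \eqref{8++}, we have
\[
x\circ y=x+y+x(x^{-1}\# y)+(x^{-1}\# y)x,
\]
where $w=x(x^{-1}\# y)$ satisfies $w^2=xy$. It therefore suffices to show $x(x^{-1}\# y)=uv$ and $(x^{-1}\# y)x=vu$, since then $x\circ y=u^2+v^2+uv+vu=(u+v)^2$. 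Equivalently, we want $x^{-1}\# y=u^{-1}v$, or by symmetry of roles a suitable symmetric expression such as $u^{-1}vu\cdots$. We will identify it via the uniqueness in the Riccati equation \eqref{eq:riccati}.

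The first key step is to find the candidate $z$ for $x^{-1}\# y$. It must be the unique element of $B^{++}$ with $zxz=y$, i.e.\ $zu^2z=v^2$. A natural guess is $z=u^{-1}v$, but this need not lie in $B^{++}$, so we use the central-commutator structure. Write $vu=uv+c$ with $c=[v,u]$ central. Note that $c$ is a commutator of elements of $B$, and its $\mathcal{A}$-part $[\pi_1 v,\pi_1 u]$ is a commutator in $\mathcal{A}$ lying in the centre. Here I would use the Kleinecke--Shirokov theorem, or the paper's promised result that $C^*$-algebras with central commutators are commutative, to conclude that the $\mathcal{A}$-part vanishes. Hence $c\in N$, and $\mathcal{A}$ is commutative.

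With $c\in N$ central, the cleanest candidate is $z=\sqrt{u^{-1}v^2u^{-1}}$-type, but a simpler approach is to check directly that $w=uv+\tfrac12 c'$ for a suitable central nilpotent correction. Concretely, I would compute $(uv)^2=uvuv$ and compare it with $xy=u^2v^2$. Since $vu=uv+c$ with $c$ central, we get $uvuv=u(uv+c)v=u^2v^2+cuv$, so $(uv)^2\neq xy$ in general. I therefore expect $x^{-1}\#y\neq u^{-1}v$, and the identity must come from the symmetric sum instead. The revised plan is to set $w=\sqrt{xy}$, the unique square root in $1+N$ via \Cref{lm:sqrtB+}(ii), after normalising by the commutative $\mathcal{A}$-part. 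We then show
\[
w+w'=uv+vu,\qquad w'=\sqrt{yx}.
\]
Working in the subalgebra generated by $u,v$ with central $c$, write $xy=(uv)^2-cuv$. Let $m=uv$; it commutes with $c$. Then $xy=m^2(1-cm^{-1})$ with $cm^{-1}\in N$ commuting with $m$, so $w=m\,s(-cm^{-1})$ by \Cref{lm:sqrt}(iii) and \Cref{lm:sqrtB+}. Similarly $yx=(vu)^2+c\,vu$, since $vuvu=v(vu)u\cdot$ reduces as $v(uv-c)u\cdots$, which gives $w'=m'\,s(cm'^{-1})$ with $m'=vu=m+c$.

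The main obstacle is verifying
\[
m\,s(-cm^{-1})+(m+c)\,s\bigl(c(m+c)^{-1}\bigr)=2m+c,
\]
together with checking that these elements are indeed the ones appearing in \eqref{8++}, which requires membership in the right set for uniqueness. Everything commutes here ($m$ and $c$ commute), so this is a formal power-series identity in two commuting variables. It is true because $m\,s(-c/m)=\sqrt{m^2-cm}$ and $(m+c)\,s(c/(m+c))=\sqrt{(m+c)^2+c(m+c)}$. I will need to check this carefully, possibly discovering that the correct expressions pair up as $(m+c/2)^2-c^2/4$. I expect this bookkeeping, and the justification that the $\mathcal{A}$-parts commute, to be the hardest part.
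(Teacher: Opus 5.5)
Your route (central commutators, then explicit square roots of $xy$ and $yx$) can be made to work, but as written the decisive step fails. The identity you call the main obstacle, $m\,s(-cm^{-1})+(m+c)\,s(c(m+c)^{-1})=2m+c$, is \emph{not} a formal identity in two commuting variables. Expanding gives $m\,s(-c/m)=m-\tfrac{c}{2}-\tfrac{c^2}{8m}-\cdots$ and $(m+c)\,s(c/(m+c))=m+\tfrac{3c}{2}-\tfrac{c^2}{8(m+c)}+\cdots$, so the sum is $2m+c-\tfrac{c^2}{4m}+O(c^3)$. Numerically, $m=1$, $c=0.1$ gives $\sqrt{0.9}+\sqrt{1.32}\approx 2.0976\neq 2.1$. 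Knowing only that $c$ is central and nilpotent does not remove the $c^2$ term, so your argument cannot conclude as it stands.

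The missing idea is that $[[x,y],z]=0$ forces $c^2=0$. Since $c=[v,u]$ is central, $[u^2,v]=u[u,v]+[u,v]u=-2cu$. This commutator is again central, so $0=[[u^2,v],v]=-2c[u,v]=2c^2$. With $c^2=0$ the series truncate: $s(-cm^{-1})=1-\tfrac12cm^{-1}$ and $s(c(m+c)^{-1})=1+\tfrac12c(m+c)^{-1}$. Hence $w=uv-\tfrac c2$, $w'=vu+\tfrac c2$, and $w+w'=uv+vu$. You still need, as you note, that $x(x^{-1}\# y)$ and $uv-\tfrac c2$ both lie in $B^{++}$; this holds once $\mathcal{A}$ is commutative, and then \Cref{lm:sqrtB+}(iii) identifies them, and likewise for $yx$. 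Incidentally, $c^2=0$ also makes Kleinecke--Shirokov unnecessary: for self-adjoint $u,v\in\mathcal{A}$, $i[u,v]$ is self-adjoint with zero square, hence zero by the $C^*$-identity.

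For comparison, the paper avoids series computations and never needs $\mathcal{A}$ to be commutative. From the ring identities $ab^2a-ba^2b=[[a,b],b]a-b[[b,a],a]$ and $a^2b^2a^2-(aba)^2=a(ab^2a-ba^2b)a$ it gets $a^2b^2a^2=(aba)^2$ for all $a,b\in B$. Hence $\sqrt{\sqrt{x}\,y\sqrt{x}}=x^{1/4}y^{1/2}x^{1/4}$ and $x\circ y=\bigl(x^{-1/4}(x+x^{1/4}y^{1/2}x^{1/4})x^{-1/4}\bigr)^2=(\sqrt{x}+\sqrt{y})^2$.
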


\begin{proof}
One may check that, in any ring, the identities
\begin{align}\label{eq:pr:lts_1}
ab^2a-ba^2b = [[a,b],b]a-b[[b,a],a]
\end{align}
and
\begin{align}\label{eq:pr:lts_2}
a^2b^2a^2 - (aba)^2 = a(ab^2a-ba^2b)a
\end{align}
hold. By \Cref{lm:lts} the identity $[[x,y],z]=0$ holds for any $x,y,z\in B^{++}$, since we assumed $[[x,y],y]-[[y,x],x]=0$. This implies that equation \eqref{eq:pr:lts_1} becomes $ab^2a-ba^2b=0$, hence \eqref{eq:pr:lts_2} leads to $\sqrt{xyx}=\sqrt{x}\sqrt{y}\sqrt{x}$. Thus,
\begin{align*}
x\circ y &= \frac{1}{\sqrt{x}}\left(x+x^\frac{1}{4}y^\frac{1}{2}x^\frac{1}{4}\right)^2\frac{1}{\sqrt{x}} \\
&=  \left(x^{-\frac{1}{4}}\left(x+x^\frac{1}{4}y^\frac{1}{2}x^\frac{1}{4}\right)x^{-\frac{1}{4}}\right)^2 \\
&= (\sqrt{x}+\sqrt{y})^2, \qedhere
\end{align*}
where the second equality follows from $a^2b^2a^2-(aba)^2=0$ and the third one follows by straightforward computations.
\end{proof}

\begin{corollary}\label{cor_if}
If the identity $[[x,y],z]=0$ holds in $B$, then $(B^{++},\circ)$ is a semigroup. \noproof
\end{corollary}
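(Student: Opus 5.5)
The plan is to reduce the corollary to \Cref{pr:lts}, which gives the closed formula $x\circ y=(\sqrt{x}+\sqrt{y})^2$ on $B^{++}$, and then to check associativity directly from that formula.

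\emph{Step 1: apply \Cref{pr:lts}.} Assume $[[x,y],z]=0$ for all $x,y,z\in B$. Taking $z=y$ gives $[[x,y],y]=0$. Exchanging the roles of $x$ and $y$ and taking $z=x$ gives $[[y,x],x]=0$. Hence $[[x,y],y]-[[y,x],x]=0$ holds on $B^{++}$, and \Cref{pr:lts} yields $x\circ y=(\sqrt{x}+\sqrt{y})^2$ for all $x,y\in B^{++}$.

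\emph{Step 2: the square root of a product.} We need $\sqrt{x\circ y}=\sqrt{x}+\sqrt{y}$. Write $\sqrt{x}=a+n$ and $\sqrt{y}=a'+n'$ with $a,a'\in\mathcal{A}^{++}$ and $n,n'\in N$. Then $\sqrt{x}+\sqrt{y}=(a+a')+(n+n')$, and $a+a'\in\mathcal{A}^{++}$ because $\mathcal{A}^{++}$ is closed under addition. So $\sqrt{x}+\sqrt{y}\in B^{++}$. By the uniqueness of square roots in $B^{++}$ (\Cref{lm:sqrtB+}(iii)), it follows that $\sqrt{x\circ y}=\sqrt{x}+\sqrt{y}$.

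\emph{Step 3: associativity.} Using Step 2 twice,
\[
(x\circ y)\circ z=(\sqrt{x}+\sqrt{y}+\sqrt{z})^2=x\circ(y\circ z).
\]
Hence $(B^{++},\circ)$ is a semigroup. Closure of $B^{++}$ under $\circ$ already follows from \Cref{pr:B_oper_welldef}; it is also visible here, since the square of an element of $B^{++}$ lies in $B^{++}$.

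\emph{Main obstacle.} The only delicate point is the membership $\sqrt{x}+\sqrt{y}\in B^{++}$ in Step 2, which is what allows the uniqueness in \Cref{lm:sqrtB+} to be invoked. This reduces to the standard fact that the sum of two positive invertible elements of a $C^*$-algebra is again positive invertible: if $a\ge\lambda 1$ with $\lambda>0$ and $a'\ge 0$, then $a+a'\ge\lambda 1$.
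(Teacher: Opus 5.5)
Your proposal is correct and matches the paper's own argument. The paper gives no separate proof, but in the proof of \Cref{thm:main} it obtains associativity from the formula $x\circ y=(\sqrt{x}+\sqrt{y})^2$ of \Cref{pr:lts}, exactly as you do. You also correctly supply the step the paper leaves tacit: $\sqrt{x}+\sqrt{y}\in B^{++}$, so uniqueness of square roots in $B^{++}$ gives $\sqrt{x\circ y}=\sqrt{x}+\sqrt{y}$.
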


An immediate consequence of \Cref{pr:lts} is that, if the identity $[[x,y],y]=0$ is satisfied by all positive elements of a $C^*$-algebra $\mathcal{A}$, then $\mathcal{A}$ is commutative.

\begin{proposition} \label{pr:Cstar_lts}
Let $\mathcal{A}$ be a $C^*$-algebra such that $[[x,y],y]=0$ for any $x,y \in \mathcal{A}^{++}$. Then $\mathcal{A}$ is commutative.
\end{proposition}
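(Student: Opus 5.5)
The plan is to view $\mathcal{A}$ as the split $C^*$-by-nilpotent algebra $\mathcal{A}\oplus 0$ and then proceed in three steps.

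First, we upgrade the hypothesis to the ternary identity. If $[[x,y],y]=0$ for all $x,y\in\mathcal{A}^{++}$, then in particular $[[x,y],y]-[[y,x],x]=0$ for all such $x,y$, and trivially also for those with $\sn{x},\sn{y}<\varepsilon$. By \Cref{lm:lts} applied to $B=\mathcal{A}$, we get $[[x,y],z]=0$ for all $x,y,z\in\mathcal{A}$. Thus every commutator $[x,y]$ is central in $\mathcal{A}$.

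Second, we show that central commutators vanish in a $C^*$-algebra. Fix $x,y\in\mathcal{A}$ and set $c=[x,y]$, which is central. Then $[x,y^k]=k\,c\,y^{k-1}$ by induction, since $c$ commutes with $y$. Extending to the exponential series gives
\[
[x,e^{ty}]=t\,c\,e^{ty}.
\]
Alternatively, for invertible $y$ we have $y^{-1}xy = x + y^{-1}c = x+cy^{-1}$, so $\operatorname{ad}$ acts in a way that produces a nilpotent-type derivation. The cleanest route is to note that the inner derivation $\delta=\operatorname{ad}_x$ satisfies $\delta(y)=c$ with $\delta(c)=0$ and $c$ commuting with $y$. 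By the Kleinecke–Shirokov theorem, $c$ is then quasinilpotent. A self-contained argument: from $[x,y^k]=k c y^{k-1}$ we get
\[
k\|c y^{k-1}\|\le 2\|x\|\|y^k\|.
\]
Taking $y$ self-adjoint, $c$ normal would give the result directly. To avoid needing that, we use $\|c^k\|$: apply the identity to $y$ replaced appropriately, or invoke Kleinecke–Shirokov, to obtain $\|c^k\|^{1/k}\to 0$.

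Third, we conclude that $c=0$ using normality. Since $[[u,v],w]=0$ for all $u,v,w$, the element $c^*=-[x^*,y^*]$ is also central, so $c$ is central and normal. A normal quasinilpotent element has spectral radius equal to its norm, hence $c=0$. Therefore $[x,y]=0$ for all $x,y$, and $\mathcal{A}$ is commutative.

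The main obstacle is the second step: deducing quasinilpotence of a central commutator. I would cite Kleinecke–Shirokov directly. If a self-contained argument is preferred, I would use the spectral-radius estimate $\|[x,y^k]\|=k\|cy^{k-1}\|$. Then, with $y$ self-adjoint and using commutativity of the $C^*$-algebra generated by $c,c^*,y$, Gelfand theory gives $|\chi(c)|\,k\,|\chi(y)|^{k-1}\le 2\|x\|\,\|y\|^k$ for every character $\chi$. Choosing $\chi$ with $|\chi(y)|=\|y\|$ and letting $k\to\infty$ forces $\chi(c)=0$ there. Finally, since any element is a combination of self-adjoint ones, we reduce to self-adjoint $x,y$ from the start.
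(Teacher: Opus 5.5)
Your main line is correct. Lemma~\ref{lm:lts} applies because the hypothesis is symmetric in $x,y$, and it makes every commutator central. Kleinecke--Shirokov (with $\delta=\operatorname{ad}_x$, $\delta^2(y)=[x,c]=0$) makes $c=[x,y]$ quasinilpotent. Centrality of $c^*=-[x^*,y^*]$ makes $c$ normal, so $\|c\|=r(c)=0$.

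The paper uses the same two ingredients, Kleinecke--Shirokov and ``normal and quasinilpotent implies zero'', but has no upgrade step. Since $[[x,y],y]=[y,[y,x]]$, Kleinecke--Shirokov applies directly to the given positive pair. For self-adjoint $x,y$ the commutator is skew-adjoint, so normality is automatic. The paper then passes from commuting positive elements to commuting self-adjoint elements to commutativity of $\mathcal{A}$. Your detour through Lemma~\ref{lm:lts} and Lemma~\ref{lm:jacobi} is therefore unnecessary; your closing remark about reducing to self-adjoint $x,y$ is exactly the paper's route. What the detour does buy is that normality of $c$ holds for arbitrary $x,y$. Also, once $c$ is central, Kleinecke--Shirokov becomes an elementary computation and needs no citation.

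That matters, because the self-contained substitute you sketch does not close as written. The Gelfand estimate $k\,|\chi(c)|\,|\chi(y)|^{k-1}\le 2\|x\|\,\|y\|^k$ forces $\chi(c)=0$ only at characters with $|\chi(y)|=\|y\|$. Even after shifting $y$ by real scalars, this reaches only characters sitting at the endpoints of $\spec(y)$, which does not give $c=0$.

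The correct elementary replacement is as follows. With $\delta=\operatorname{ad}_x$, you have $\delta(c)=0$ and $c$ commutes with $y$, so induction gives $\delta^j(y^k)=\frac{k!}{(k-j)!}\,c^j y^{k-j}$. Hence $k!\,c^k=\delta^k(y^k)$. Since $\|\delta\|\le 2\|x\|$, this gives
\[
\|c^k\|^{1/k}\le \frac{2\|x\|\,\|y\|}{(k!)^{1/k}}\longrightarrow 0 .
\]
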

\begin{proof}
By the Kleinecke-Shirokov Theorem (see~\cite{MR0087914} and~\cite{Shi56}), $[[x,y],y]=0$ implies that $[x,y]$ is quasi-nilpotent, and thus, $\spec({[x,y]})=\{0\}$. If $x,y \in \mathcal{A}^{++}$, then $i[x,y]$ is self-adjoint, and $\|[x,y]\|=\|i[x,y]\|$ is equal to the spectral radius. This means that $[x,y]=0$ holds in $\A^{++}$. Finally, if all positive elements commute, then all self-adjoint elements do so, and thus, we obtain the commutativity of $\mathcal{A}$. 
\end{proof}

As mentioned in the introduction, \Cref{pr:Cstar_lts} makes clear that L.~Molnár's commutativity and associativity results~\cite{molnar2021} are consequences of \Cref{thm:main}.

\section{Power series expansion of the associator}

To prove the converse implication of \Cref{cor_if}, and so to conclude the proof of \Cref{thm:main}, we need to compute the low-degree terms of the power series expansion of the operation \eqref{eq:circdef}. 

Let $\xi,\eta$ be non-commuting variables over $\mathbb{C}$. Define the formal power series 
\begin{align*}
\hat{s}(\xi,\eta)=s(\xi+\eta+\xi\eta)+s(\xi+\eta+\eta\xi)-2-\xi-\eta,
\end{align*}
where $s$ is defined as in \eqref{eq:formalsroot}. If $B$ is a split $C^*$-by-nilpotent algebra and $x,y\in B$ with $\sn{x},\sn{y}<\sqrt{2}-1$, then one may see that $\sn{x+y+xy}, \sn{x+y+yx} < 1$, and $\hat{s}(x,y)$ is a well-defined element of $B$. 

\begin{lemma} \label{lm:prodrule}
There exists $\varepsilon>0$ such that for any $c,d>0$ and $x,y\in B$ with $\sn{x}<c\varepsilon$, $\sn{y}<d\varepsilon$, we have
\begin{equation} \label{eq:prodrule}
(c+x)\circ (d+y) = (\sqrt{c}+\sqrt{d})^2+(\sqrt{c}+\sqrt{d})\left(\frac{x}{\sqrt{c}}+\frac{y}{\sqrt{d}}\right) + \sqrt{cd} \, \hat{s}\left(\frac{x}{c},\frac{y}{d}\right)
\end{equation}
and
\begin{equation}\label{eq_lemma5.1}
\sn{(c+x)\circ (d+y) - (\sqrt{c}+\sqrt{d})^2}\leq 3(\sqrt{c}+\sqrt{d})^2\varepsilon.
\end{equation}
\end{lemma}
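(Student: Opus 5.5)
We work from the expression \eqref{eq:circbysharp}, or more directly from the formula $x\circ y=x+y+(xy)^{1/2}+(yx)^{1/2}$ in the form already used in the proof of \Cref{pr:B_oper_welldef}. Set $X=c+x$ and $Y=d+y$ with $c,d>0$ real scalars. The scalars commute with everything, so
\[
XY=cd\left(1+\tfrac{x}{c}+\tfrac{y}{d}+\tfrac{xy}{cd}\right)=cd\,(1+\xi+\eta+\xi\eta),\qquad \xi=\tfrac{x}{c},\ \eta=\tfrac{y}{d},
\]
and similarly $YX=cd(1+\xi+\eta+\eta\xi)$.

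The first step is to identify the square root of $XY$ in the sense of \eqref{8++}. Take $\varepsilon<\sqrt2-1$, so that $\sn{\xi},\sn{\eta}<\varepsilon$ gives $\sn{\xi+\eta+\xi\eta}<1$. Then $z=\sqrt{cd}\,s(\xi+\eta+\xi\eta)$ satisfies $z^2=XY$. We must check that $z$ equals the element $X(X^{-1}\#Y)$ from \eqref{eq:circbysharp}. Both square to $XY$, and we plan to use \Cref{lm:sqrt}(ii) after normalising by $\sqrt{cd}$.

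\begin{itemize}
\item For $z/\sqrt{cd}$, the bound $\sn{1-z/\sqrt{cd}}<1$ follows from \Cref{lm:sqrt}(i).
\item For $w=X(X^{-1}\#Y)/\sqrt{cd}$, we need $\sn{1-w}<1$ and that $w$ commutes with $\xi+\eta+\xi\eta$. Commutation is automatic since $w^2=1+\xi+\eta+\xi\eta$. For the norm bound, project to $\mathcal A$, where $X^{-1}\#Y$ is a Kubo--Ando mean of elements close to $c^{-1}$ and $d$. Continuity and monotonicity of the geometric mean give $X^{-1}\#Y\approx\sqrt{d/c}$ in norm, hence $\sn{1-w}$ small once $\varepsilon$ is small.
\end{itemize}

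This identification is the main obstacle. If the continuity argument is awkward, we would instead shrink $\varepsilon$ and use the estimate $\sn{X-c}<c\varepsilon$ together with Löwner-order bounds $(1-\varepsilon)c\le \pi_1(X)\le(1+\varepsilon)c$. These squeeze $\pi_1(X^{-1}\#Y)$ between $\sqrt{d/c}\,\frac{\sqrt{1-\varepsilon}}{\sqrt{1+\varepsilon}}$ and its reciprocal analogue, which bounds $\sn{1-w}$ explicitly.

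Granting the identification, and arguing symmetrically for $YX$, we get
\[
X\circ Y=c+d+x+y+\sqrt{cd}\bigl(s(\xi+\eta+\xi\eta)+s(\xi+\eta+\eta\xi)\bigr).
\]
By the definition of $\hat s$, the last bracket equals $\hat s(\xi,\eta)+2+\xi+\eta$. Now collect terms:
\[
c+d+2\sqrt{cd}=(\sqrt c+\sqrt d)^2,
\qquad
x+y+\sqrt{cd}\left(\tfrac{x}{c}+\tfrac{y}{d}\right)=(\sqrt c+\sqrt d)\left(\tfrac{x}{\sqrt c}+\tfrac{y}{\sqrt d}\right).
\]
This is exactly \eqref{eq:prodrule}.

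For \eqref{eq_lemma5.1}, we bound the two correction terms by the triangle inequality and submultiplicativity. The linear term satisfies
\[
\sn{(\sqrt c+\sqrt d)(x/\sqrt c+y/\sqrt d)}<(\sqrt c+\sqrt d)(\sqrt c+\sqrt d)\varepsilon .
\]
For the remaining term, \Cref{lm:sqrt}(i) gives $\sn{1-s(u)}\le 1-\sqrt{1-\sn u}\le \sn u$. Write $\hat s(\xi,\eta)=(s(u_1)-1-\tfrac{u_1}{2})+(s(u_2)-1-\tfrac{u_2}{2})+\tfrac{\xi\eta+\eta\xi}{2}$, with $u_1=\xi+\eta+\xi\eta$ and $u_2=\xi+\eta+\eta\xi$. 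Each piece is $O(\varepsilon^2)$: the tail of $s$ beyond the linear term is bounded by $1-\tfrac{t}{2}-\sqrt{1-t}\le t^2/2$ for $t\le 1$, and $\sn{u_i}\le 2\varepsilon+\varepsilon^2$. Hence $\sn{\hat s(\xi,\eta)}\le C\varepsilon^2\le 2\varepsilon$ for $\varepsilon$ small. Since $\sqrt{cd}\le(\sqrt c+\sqrt d)^2/2$, the $\hat s$-term contributes at most $(\sqrt c+\sqrt d)^2\varepsilon$. Adding the two bounds gives the total $\le 3(\sqrt c+\sqrt d)^2\varepsilon$ after fixing $\varepsilon$ small enough.
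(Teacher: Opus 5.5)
Your proposal is correct and follows the paper's route. You identify $X(X^{-1}\#Y)$ and $(X^{-1}\#Y)X$ with $\sqrt{cd}\,s(\xi+\eta+\xi\eta)$ and $\sqrt{cd}\,s(\xi+\eta+\eta\xi)$ via \Cref{lm:sqrt}(ii), then substitute into \eqref{eq:circbysharp} and regroup. Your L\"owner-order squeeze of $\pi_1(X)^{-1}\#\pi_1(Y)$ actually supplies the justification for $\sn{1-w}<1$ that the paper only asserts when it ``chooses $\varepsilon$ small enough''; both arguments tacitly need $c+x,d+y\in B^{++}$, i.e.\ self-adjoint $\mathcal{A}$-parts.

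For \eqref{eq_lemma5.1} the paper is slightly more economical. It applies $\sn{s(u)-1}\le\sn{u}$ directly to $x+y+\sqrt{cd}\,(s(\alpha)+s(\beta)-2)$ to get $\varepsilon c+\varepsilon d+(4\varepsilon+2\varepsilon^2)\sqrt{cd}$, so it needs neither your second-order tail estimate nor any further shrinking of $\varepsilon$.
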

\begin{proof}
We can choose $\varepsilon >0$ small enough\footnote{It is enough to choose again $\varepsilon < \sqrt{3-2\sqrt{2}}=\sqrt{2}-1$.} such that, if $\sn{x-1},\sn{y-1}<\varepsilon$ then
\[
\sn{x(x^{-1}\# y)-1}, \sn{(x^{-1}\# y)x-1}, \sn{xy-1}, \sn{yx-1} < 1.
\]
If $x,y\in B^{++}$, then by (ii) of \Cref{lm:sqrt} and equation \eqref{8++}, we have
\[
x(x^{-1}\# y)=s(xy-1), \qquad (x^{-1}\# y)x=s(yx-1).
\]
In other words, if $x,y\in B$ are such that $\sn{x},\sn{y}<\varepsilon$, then $1+x,1+y\in B^{++}$, and
\begin{align*}
(1+x)((1+x)^{-1}\# (1+y)) &= s(x+y+xy),\\
((1+x)^{-1}\# (1+y))(1+x) &= s(x+y+yx).
\end{align*}
We take now $c,d>0$ and $\sn{x}<c\varepsilon$, $\sn{y}<d\varepsilon$. Then
\begin{align*}
(c+x)((c+x)^{-1}\# (d+y)) &= \sqrt{cd} \, s\left(\frac{x}{c} + \frac{y}{d} + \frac{xy}{cd}\right),\\
((c+x)^{-1}\# (d+y))(c+x) &= \sqrt{cd} \, s\left(\frac{x}{c} + \frac{y}{d} + \frac{yx}{cd}\right).
\end{align*}
By equation \eqref{eq:circbysharp}, one has
\begin{align*} \label{eq:cxody}
(c+x)\circ (d+y) &= c+d+x+y+\sqrt{cd} \left(s\left(\frac{x}{c} + \frac{y}{d} + \frac{xy}{cd}\right) + s\left(\frac{x}{c} + \frac{y}{d} + \frac{yx}{cd}\right)
\right) \\
&= c+d+x+y+\sqrt{cd} \left(\hat{s}\left(\frac{x}{c},\frac{y}{d}\right) +2 +\frac{x}{c} + \frac{y}{d} \right) \\
&= (\sqrt{c}+\sqrt{d})^2+(\sqrt{c}+\sqrt{d})\left(\frac{x}{\sqrt{c}}+\frac{y}{\sqrt{d}}\right) + \sqrt{cd} \, \hat{s}\left(\frac{x}{c},\frac{y}{d}\right).
\end{align*}
This proves \eqref{eq:prodrule}. Furthermore
\begin{align*}
\sn{(c+x)\circ (d+y) - (\sqrt{c}+\sqrt{d})^2} &= \sn{x+y+\sqrt{cd} \left(s\left(\alpha \right)+s\left(\beta\right)-2\right)} \\
&\leq \sn{x} + \sn{y} + \sqrt{cd}(\sn{\alpha} + \sn{\beta }) \\
&< \varepsilon c + \varepsilon d + (4\varepsilon + 2\varepsilon^2)\sqrt{cd} \\
&< 3(\sqrt{c}+\sqrt{d})^2\varepsilon,
\end{align*}
where $\alpha=\dfrac{x}{c}+\dfrac{y}{d}+\dfrac{xy}{cd}$ and $\beta=\dfrac{x}{c}+\dfrac{y}{d}+\dfrac{yx}{cd}$. Thus, \eqref{eq_lemma5.1} holds.
\end{proof}

Our goal now is to compute the low-degree terms of the difference
\[
A(x,y) = ((1+x) \circ 1) \circ (1+y) - (1+x) \circ (1 \circ (1+y)),
\]
where $x,y \in B^{++}$ are elements of small semi-norm. We start with a closer look at the formal power series $\hat{s}(\xi,\eta)$.
\begin{lemma} \label{lm:shatlow}
The low degree terms of the formal power series $\hat{s}(\xi,\eta)$ are given by
\begin{equation} \label{eq:shat_terms}
\hat{s}(\xi,\eta)=-\frac{1}{4}(\xi-\eta)^2+\frac{1}{8}(\xi^3-\xi\eta\xi-\eta\xi\eta+\eta^3) +O(4),
\end{equation}  
where $O(4)$ is for a formal sum of monomials of degree at least $4$ in the non-commuting variables $\xi, \eta$.
\end{lemma}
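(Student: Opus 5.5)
This is a direct computation in the ring of formal power series in the non-commuting variables $\xi,\eta$. Only monomials of total degree at most $3$ matter. I would substitute $u=\xi+\eta+\xi\eta$ and $v=\xi+\eta+\eta\xi$ into the truncated series
\[
s(\tau)=1+\tfrac12\tau-\tfrac18\tau^2+\tfrac1{16}\tau^3+O(\tau^4).
\]
Since $u$ and $v$ have no constant term, $u^4$ and $v^4$ lie in $O(4)$. Also, $u^2$ and $v^2$ contribute only through their parts of degree $2$ and $3$, and $u^3$, $v^3$ only through their parts of degree $3$. So the truncation is legitimate.

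\textbf{Degree $1$ and $2$.} The linear part of $s(u)+s(v)$ is
\[
\tfrac12(\xi+\eta)+\tfrac12(\xi+\eta)=\xi+\eta,
\]
and the constant is $2$, so both cancel against $-2-\xi-\eta$. In degree $2$:
\begin{itemize}
\item from $\tfrac12 u$ and $\tfrac12 v$ we get $\tfrac12(\xi\eta+\eta\xi)$;
\item from $-\tfrac18 u^2$ and $-\tfrac18 v^2$ we get $-\tfrac14(\xi+\eta)^2$.
\end{itemize}
Adding these,
\[
-\tfrac14(\xi^2+\xi\eta+\eta\xi+\eta^2)+\tfrac12(\xi\eta+\eta\xi)=-\tfrac14(\xi^2-\xi\eta-\eta\xi+\eta^2)=-\tfrac14(\xi-\eta)^2 .
\]

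\textbf{Degree $3$.} Two sources contribute.
\begin{itemize}
\item From $-\tfrac18(u^2+v^2)$, the cubic part is the cross terms
\[
-\tfrac18\bigl[(\xi+\eta)\xi\eta+\xi\eta(\xi+\eta)+(\xi+\eta)\eta\xi+\eta\xi(\xi+\eta)\bigr].
\]
Expanding gives
\[
-\tfrac18\bigl[\xi^2\eta+\eta\xi\eta+\xi\eta\xi+\xi\eta^2+\xi\eta^2+\eta^2\xi+\eta\xi^2+\eta\xi\eta\bigr],
\]
which collects to $-\tfrac18\bigl[\xi^2\eta+\eta\xi^2+\xi\eta^2+\eta^2\xi+2\xi\eta\xi+2\eta\xi\eta\bigr]$. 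Here I would double-check the bookkeeping by writing each product explicitly.
\item From $\tfrac1{16}(u^3+v^3)$, the cubic part is $\tfrac18(\xi+\eta)^3$. This is $\tfrac18$ times the sum of all eight words of length $3$ in $\xi,\eta$.
\end{itemize}
Summing the two, the words $\xi^2\eta,\eta\xi^2,\xi\eta^2,\eta^2\xi$ cancel. The words $\xi\eta\xi$ and $\eta\xi\eta$ survive with coefficient $\tfrac18-\tfrac28=-\tfrac18$. The pure powers $\xi^3,\eta^3$ keep coefficient $\tfrac18$. This gives $\tfrac18(\xi^3-\xi\eta\xi-\eta\xi\eta+\eta^3)$, as claimed in \eqref{eq:shat_terms}.

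\textbf{Main obstacle.} The only real difficulty is the noncommutative bookkeeping of the cubic cross terms in $u^2+v^2$. The symmetry $\xi\leftrightarrow\eta$ of $\hat s$ (which swaps $u$ and $v$) serves as a consistency check: the final cubic part must be symmetric, and it is.
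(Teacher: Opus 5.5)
Your computation is correct and is essentially the paper's proof. The paper expands $s(\xi+\eta+\xi\eta)$ through degree $3$ and adds its $\xi\leftrightarrow\eta$ mirror image, while you collect the contributions to $s(u)+s(v)$ degree by degree; this is the same bookkeeping. One small slip: in your expanded list of cubic cross terms, the fifth word should be $\xi\eta\xi$ (coming from $(\xi+\eta)\eta\xi$), not a second $\xi\eta^2$. However, your collected expression $\xi^2\eta+\eta\xi^2+\xi\eta^2+\eta^2\xi+2\xi\eta\xi+2\eta\xi\eta$ is correct, so the final cubic term $\tfrac18(\xi^3-\xi\eta\xi-\eta\xi\eta+\eta^3)$ stands.
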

\begin{proof}
One may check that
\begin{multline*}
s(\xi+\eta+\xi\eta)=1+\frac{1}{2}\,(\xi+\eta)-\frac{1}{8}\,(\xi^2-3\xi\eta+\eta\xi+\eta^2)+ \\ \frac{1}{16}\,
(\xi^3-\xi^2\eta-\xi\eta\xi-\xi\eta^2+\eta\xi^2-\eta\xi\eta+\eta^2\xi+\eta^3) +O(4)
\end{multline*}
and then
\begin{multline*} \label{eq:s_sym_terms}
s(\xi+\eta+\xi\eta)+s(\xi+\eta+\eta\xi)= \\
2+\xi+\eta-\frac{1}{4}\,(\xi-\eta)^2 + \frac{1}{8}\,
(\xi^3-\xi\eta\xi-\eta\xi\eta+\eta^3) +O(4),
\end{multline*}
which implies \eqref{eq:shat_terms}.
\end{proof}

From now on, the notation $O(4)$ is for a (possibly infinite) sum of monomials of degree at least $4$ in the non-commuting variables $x,y$.

\begin{proposition} \label{pr:Axy_expansion}
Let $B=\mathcal{A}\oplus N$ be a split $C^*$-by-nilpotent algebra. There exists $\varepsilon>0$ such that, for any $x,y\in B$ with $\sn{x}< \varepsilon$ and $\sn{y}<\varepsilon$, the low degree terms of $A(x,y)$ are given by
\begin{align*}
A(x,y) = \dfrac{1}{32}([[x,y],y]-[[y,x],x]) + O(4).
\end{align*}
\end{proposition}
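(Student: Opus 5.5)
The plan is to expand both bracketings of $A(x,y)$ with \Cref{lm:prodrule}, feed in the expansion of $\hat{s}$ from \Cref{lm:shatlow}, and compare homogeneous components up to degree $3$ in the non-commuting variables $x,y$.

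First I fix $\varepsilon$ smaller than the constant in \Cref{lm:prodrule}, and small enough that every intermediate argument stays inside the region where that lemma applies. This is where estimate \eqref{eq_lemma5.1} is used: it gives $\sn{(1+x)\circ 1-4}\le 12\varepsilon$, and the analogous bound for $1\circ(1+y)$, so the second application of \Cref{lm:prodrule}, with $c=4$, $d=1$ or with $c=1$, $d=4$, is legitimate once $\varepsilon$ is shrunk (for instance requiring $12\varepsilon<4\varepsilon'$, where $\varepsilon'$ is the constant of the lemma).

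Step one is the inner products. Applying \eqref{eq:prodrule} with $c=d=1$ and second argument $0$ gives
\[
(1+x)\circ 1=4+u,\qquad u=2x+\hat{s}(x,0)=2x-\tfrac14x^2+\tfrac18x^3+O(4),
\]
since $\hat{s}(\xi,0)=2s(\xi)-2-\xi$. Symmetrically, $1\circ(1+y)=4+v$ with $v=2y-\tfrac14y^2+\tfrac18y^3+O(4)$.

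Step two is the outer products, again by \eqref{eq:prodrule}:
\[
(4+u)\circ(1+y)=9+3\left(\tfrac u2+y\right)+2\,\hat{s}\left(\tfrac u4,y\right),
\]
\[
(1+x)\circ(4+v)=9+3\left(x+\tfrac v2\right)+2\,\hat{s}\left(x,\tfrac v4\right).
\]
Hence
\[
A(x,y)=\tfrac32(u-v)+3(y-x)+2\left(\hat{s}\left(\tfrac u4,y\right)-\hat{s}\left(x,\tfrac v4\right)\right).
\]

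Step three is to substitute $u$ and $v$ into \eqref{eq:shat_terms} and collect degrees.
\begin{enumerate}[(a)]
\item Degree $1$: the contribution $3(x-y)+3(y-x)$ vanishes.
\item Degree $2$: we get $\tfrac38(y^2-x^2)-\tfrac12\bigl((\tfrac x2-y)^2-(x-\tfrac y2)^2\bigr)$, which also vanishes.
\item Degree $3$: this is the real computation. Contributions come from $\tfrac32\cdot\tfrac18(x^3-y^3)$, from the cubic part $\tfrac18(\xi^3-\xi\eta\xi-\eta\xi\eta+\eta^3)$ evaluated at $(\tfrac x2,y)$ and at $(x,\tfrac y2)$, and from the cross terms of $-\tfrac14(\xi-\eta)^2$. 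The latter arise from the $-\tfrac1{16}x^2$ correction inside $\tfrac u4$ (respectively $-\tfrac1{16}y^2$ inside $\tfrac v4$) multiplied against the linear parts $\tfrac x2-y$ (respectively $x-\tfrac y2$).
\end{enumerate}
I expect the pure powers $x^3,y^3$ to cancel. The mixed monomials $x^2y,\ xyx,\ yx^2,\ y^2x,\ yxy,\ xy^2$ should then assemble into $\tfrac1{32}\bigl([[x,y],y]-[[y,x],x]\bigr)$. A useful check is that this target expands to $\tfrac1{32}(xy^2-2yxy+y^2x-yx^2+2xyx-x^2y)$.

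The main obstacle is justifying that these formal manipulations are valid in $B$, and that the remainder is genuinely $O(4)$. The series involved are non-commutative power series in two variables, and they are composed with each other. The identities of Section 2 only cover one-variable series, via the ring homomorphism $R_\rho\to B$. I would argue that all series in sight converge absolutely with respect to the semi-norm $\sn{\cdot}$ on the nilpotent-commuting part, by majorising each with the corresponding commutative series evaluated at $\sn{x},\sn{y}<\varepsilon$, exactly as in \Cref{lm:sqrt}(i). Rearrangement into homogeneous components is then permitted, and the degree-$\ge4$ tail is a convergent sum of monomials of degree at least $4$.

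Beyond that, the degree-$3$ bookkeeping is the step most prone to error. I would organise it coefficient by coefficient, one monomial at a time.
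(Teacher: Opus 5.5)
Your proposal is correct and follows essentially the paper's route: two applications of \Cref{lm:prodrule}, with the same $12\varepsilon<4\varepsilon'$ bookkeeping via \eqref{eq_lemma5.1}, then substitution of \Cref{lm:shatlow} and collection of terms of degree at most $3$. The only difference is that the paper uses commutativity of $\circ$ to write $(1+x)\circ(1\circ(1+y))=B(y,x)$ and expands only $B(x,y)=((1+x)\circ 1)\circ(1+y)$, whereas you expand $(1+x)\circ(4+v)$ directly; this is equivalent because $\hat{s}(\xi,\eta)=\hat{s}(\eta,\xi)$.

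The degree-$3$ count you left pending comes out as you predict. The $x^3$ coefficient is $\tfrac{3}{16}+\tfrac{1}{16}-\tfrac14=0$, the $y^3$ coefficient cancels likewise, and the mixed monomials give exactly $\tfrac{1}{32}(xy^2-2yxy+y^2x-yx^2+2xyx-x^2y)$. The paper also treats the $O(4)$ remainder formally. Your proposed justification, however, does not work as stated: majorising by $\sn{\cdot}$ cannot control anything in $N$, since $\sn{\cdot}$ vanishes there.
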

\begin{proof}
We define
\[
B(x,y) = ((1+x) \circ 1) \circ (1+y).
\]
As $1+x,1+y \in B^{++}$ and $(1+x) \circ (1 \circ (1+y)) = ((1+y) \circ 1) \circ (1+x) = B(y,x)$ for the commutativity of \eqref{eq:circdef}, we have
\[
A(x,y)=B(x,y)-B(y,x).
\]
We choose $\varepsilon>0$ such that \Cref{lm:prodrule} holds for $\varepsilon'=3\varepsilon$ and we define $w$ by
\[
(1+x)\circ 1 = 4 +2x +\hat{s}(x,0) = 4+w.
\]
By \eqref{eq_lemma5.1}, $\sn{w}<12\varepsilon=4\varepsilon'$ and we can apply again \Cref{lm:prodrule} with $1+y$ and $4+w$. Thus, by \eqref{eq:prodrule} we obtain
\begin{align*}
B(x,y) &= (4+w)\circ (1+y) \\
&= 9+3\left(\frac{w}{2}+y\right) +2\hat{s}\left(\frac{w}{4},y\right) \\
&= 9+3(x+y) + \frac{3}{2}\hat{s}(x,0) + 2\hat{s}\left(\frac{x}{2}+\frac{1}{4}\hat{s}(x,0),y\right). 
\end{align*}
By \Cref{lm:shatlow}, we have that the low degree terms of $B(x,y)$ are given by
\begin{multline*} \label{eq:Bxy}
B(x,y)=9+3(x+y)-\frac{1}{4}(2x^2-xy-yx+2y^2)+ \\ 
\frac{1}{32}(8x^3-x^2y-2xyx-yx^2-4yxy+8y^3) + O(4).
\end{multline*}  
As the conditions for $x$ and $y$ were the same, we can interchange them to obtain
\begin{align*}
A(x,y)& = \frac{1}{32}(xy^2-2yxy+y^2x-x^2y+2xyx-yx^2)+O(4)\\
&=\dfrac{1}{32}([[x,y],y]-[[y,x],x]) + O(4). \qedhere
\end{align*}

\end{proof}

We are now ready to prove the converse of \Cref{pr:lts}.

\begin{proposition}\label{prop17}
Let $B=\A \oplus N$ be a split $C^*$-by-nilpotent algebra. If $(B^{++}, \circ)$ is a semigroup, then $[[x,y],y]-[[y,x],x]=0$ for every $x,y \in B^{++}$ with $\sn{x}<\varepsilon$ and $\sn{y}<\varepsilon$, for some $\varepsilon > 0$.
\end{proposition}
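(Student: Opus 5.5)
The plan is to use the expansion of \Cref{pr:Axy_expansion} together with a scaling argument in a real parameter $t$. Fix the $\varepsilon>0$ given by \Cref{pr:Axy_expansion} and let $x,y\in B^{++}$ with $\sn{x},\sn{y}<\varepsilon$. Then $1+x,\,1+y\in B^{++}$, and $1\in B^{++}$. Associativity of $(B^{++},\circ)$ therefore gives
\[
A(x,y)=((1+x)\circ 1)\circ(1+y)-(1+x)\circ(1\circ(1+y))=0 .
\]
For every real $t\in(0,1]$ we also have $tx,ty\in B^{++}$ and $\sn{tx},\sn{ty}<\varepsilon$. Hence $A(tx,ty)=0$ for all such $t$.

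Next I will read off the cubic term. Composing the formal series of \Cref{lm:prodrule} and \Cref{lm:shatlow}, exactly as in the proof of \Cref{pr:Axy_expansion}, writes $A(x,y)$ as a series $\sum_{k\ge 2}p_k(x,y)$. Here each $p_k$ is a homogeneous noncommutative polynomial of degree $k$ with rational coefficients. By \Cref{pr:Axy_expansion}, $p_2=0$ and
\[
p_3(x,y)=\tfrac1{32}\bigl([[x,y],y]-[[y,x],x]\bigr).
\]
Substituting $tx,ty$ gives
\[
0=A(tx,ty)=\sum_{k\ge3}t^k p_k(x,y).
\]
The aim is to conclude that every coefficient $p_k(x,y)$ vanishes, in particular $p_3(x,y)$, which is the claim.

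The main obstacle is to justify this ``identity theorem'' in $B$, which is only semi-normed: we must show that $t\mapsto A(tx,ty)$ really is a convergent power series in $t$ with the coefficients $p_k(x,y)$. I would work componentwise via $B=\A\oplus N$. Write $x=a+n$ and $y=a'+m$.

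For the $\A$-component $\pi_1$, a homomorphism, we get $\pi_1 A(tx,ty)=\sum_k t^k p_k(a,a')$. This series converges absolutely in the Banach algebra $\A$ for $|t|<1$, since the majorant series used in \Cref{lm:sqrt}(i) and \Cref{lm:prodrule} dominates it. It is therefore a real-analytic $\A$-valued function vanishing on $(0,1]$, so every $p_k(a,a')$ is $0$.

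For the nilpotent part, I would expand each $p_k(a+n,a'+m)$ multilinearly and group the resulting words by the number $j$ of letters taken from $\{n,m\}$. Each such group is a finite combination of expressions
\[
c_0\,u_1\,c_1\cdots u_j\,c_j ,\qquad u_i\in\{n,m\},
\]
where the $c_i$ are norm-convergent series in $a,a'$ and $t$. Since $\A$ commutes with $N$, the $u_i$ can be moved past the $c_i$. So each group equals a sum over words $w$ in $n,m$ of length $j$ of $w$ times an analytic $\A$-valued coefficient in $t$.

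Words of length $j$ vanish as soon as $j$ exceeds the nilpotency degree relevant here, which I would try to control via the subalgebra generated by $n,m$. If that is not available, I would instead localise to a fixed $j$, using independence of the scalings of $a,a'$ and of $n,m$: replace $x$ by $ta+sn$ and $y$ by $ta'+sm$, and compare coefficients in $s$ as well. Once the expression is a finite sum of (words)$\times$(analytic coefficients in $t$), vanishing for $t\in(0,1]$ forces the $t^3$-coefficient $p_3(x,y)$ to vanish. This yields
\[
[[x,y],y]-[[y,x],x]=0 .
\]
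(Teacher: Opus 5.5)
Your overall strategy is the paper's: associativity gives $A(tx,ty)=0$, and you then extract the cubic coefficient of the expansion in Proposition~\ref{pr:Axy_expansion}. Your treatment of the $\mathcal{A}$-component via $\pi_1$ is also fine; it gives $[[a,a'],a']-[[a',a],a]=0$.

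The gap is in the final step for the nilpotent part. After moving the letters $n,m$ to the front, you reach an identity $\sum_w w(n,m)\,C_w(t)=0$ for $t\in(0,1]$, where each $C_w(t)=\sum_k t^k c_{w,k}$ is a convergent $\mathcal{A}$-valued series. You then assert that this forces the $t^3$-coefficient to vanish. But $N$ carries no topology. The map $\Phi\colon(c_w)_w\mapsto\sum_w w(n,m)c_w$ from $\mathcal{A}^r$ to $N$ is only $\mathcal{A}$-linear, and its kernel is an $\mathcal{A}$-submodule that need not be closed. For example, take $N=(\mathcal{A}/I)\epsilon$ with $\epsilon^2=0$ and $I$ a non-closed ideal of a commutative $\mathcal{A}$. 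From $C(t)\in\ker\Phi$ for all $t$ you only learn that the Taylor coefficients, being limits of difference quotients, lie in the closure of $\ker\Phi$. So no identity theorem is available. Your second parameter $s$ separates the $N$-degree $j$, but within a fixed $j$ the same infinite $\mathcal{A}$-series in $t$ remain, since you keep $t>0$ throughout.

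The missing idea, which is exactly how the paper argues, is that the cubic term splits along $B=\mathcal{A}\oplus N$. For $x=a+n$ and $y=a'+m$ you have $[x,y]=[a,a']+[n,m]$ with $[a,a']\in\mathcal{A}$ and $[n,m]\in N$. Every mixed bracket vanishes because $\mathcal{A}$ and $N$ commute elementwise, so
\[
[[x,y],y]-[[y,x],x]=\bigl([[a,a'],a']-[[a',a],a]\bigr)+\bigl([[n,m],m]-[[m,n],n]\bigr).
\]
It therefore suffices to treat two cases separately: $x,y\in\mathcal{A}^{++}$ of small norm, which is your $\pi_1$ argument, and $x,y\in N$. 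In the second case, drop the $\mathcal{A}$-part entirely. Then $1+tn,\,1+tm\in B^{++}$ for every real $t$, and the coefficients $z_i$ of $A(tn,tm)=\sum_i z_i t^i$ are noncommutative polynomials in $n,m$ with \emph{scalar} coefficients. Grant the finiteness you already flagged, namely that long words in $n,m$ vanish; the paper tacitly assumes this too when it uses the filtration $N^k$. Then $A(tn,tm)$ is a polynomial in $t$ with coefficients in $B$ that vanishes for infinitely many $t$, so $z_3=0$. In your two-parameter family this is simply the choice $t=0$ with $s$ arbitrary, which you excluded by restricting to $t\in(0,1]$.
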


\begin{proof}
Assume that the operation \eqref{eq:circdef} is associative on $B^{++}$. It is sufficient to show that $[[x,y],y]-[[y,x],x]=0$
holds whenever $x,y\in N$, or $x,y\in \mathcal{A}^{++}$ with $\|x\|<\varepsilon$ and $\|y\|<\varepsilon$, for some $\varepsilon>0$. As the elements of $\mathcal{A}$ and $N$ commute, this implies $[[x,y],y]-[[y,x],x]=0$, for every $x,y\in B^{++}$ with $\sn{x}<\varepsilon$ and $\sn{y}<\varepsilon$.

Let us choose $\varepsilon>0$ according to \Cref{lm:prodrule} and fix the elements $x,y\in B^{++}$ such that $\sn{x}< \varepsilon$ and $\sn{y}<\varepsilon$. If $\varepsilon'=\dfrac{\varepsilon}{\max\lbrace \sn{x},\sn{y}\rbrace}$, then by \Cref{pr:Axy_expansion}, for any $0<t<\varepsilon'$ we have that $\sn{tx} < \varepsilon$, $\sn{ty} < \varepsilon$, and
\begin{equation}
0=A(tx,ty)=\sum_{i=3}^{\infty} z_i t^{i},
\end{equation}
where $z_3=\dfrac{1}{32}([[x,y],y]-[[y,x],x])$, and $z_i$ is a homogeneous polynomial of degree~$i$ in the non-commuting variables $x,y$.

We prove the proposition by showing $z_3=0$. Let us first assume that $z_3\neq 0$ and $x,y\in N$. Let $k$ be the least positive integer such that $z_3\in N^k\setminus N^{k+1}$. For infinitely many choices of $t$, we have 
\begin{equation*}
z_3+z_4t+\cdots + z_{k} t^{k-3} \in N^{k+1},
\end{equation*}
which implies $z_3, z_4, \ldots, z_k \in N^{k+1}$, a contradiction.

Now, let $x,y\in \mathcal{A}^{++}$. Then $\|x\|< \varepsilon$, $\|y\|<\varepsilon$, the series 
\[
\sum_{i=4}^{\infty} z_i t^{i-3}
\]
is absolutely convergent and it tends to $0$ if $t\to 0$. Hence, we conclude that 
\[
z_3=-\sum_{i=4}^{\infty} z_i t^{i-3}=0.
\]
\end{proof}

We now have all the ingredients to prove \Cref{thm:main}.

\begin{proof}[Proof of \Cref{thm:main}]

Let $B=\A \oplus N$ be a split $C^*$-by-nilpotent algebra. By \Cref{pr:B_oper_welldef}, we have that the operation \eqref{eq:circdef} is well-defined and commutative on $B^{++}$.

Furthermore, if we suppose that $(B^{++}, \circ)$ is a semigroup, then by \Cref{prop17} and \Cref{lm:lts} the identity $[[x,y],z]=0$ holds in $B$.

Conversely, if $[[x,y],z]=0$ for any $x,y,z \in B$, then by \Cref{cor_if} we have that $x \circ y=(\sqrt{x}+\sqrt{y})^2$, and \eqref{eq:circdef} is associative on $B^{++}$.
\end{proof}

\section*{Acknowledgments}

The first, second and third authors are supported by the ``National Group for Algebraic and Geometric Structures and their Applications'' (GNSAGA -- INdAM). The first author is also supported by the University of Salento. The second and third authors are also supported by the University of Palermo, and by the SDF Sustainability Decision Framework Research Project -- MISE decree of 31/12/2021 (MIMIT Dipartimento per le politiche per le imprese -- Direzione generale per gli incentivi alle imprese) -- CUP:~B79J23000530005, COR:~14019279, Lead Partner:~TD Group Italia Srl, Partner:~Università degli Studi di Palermo. The second author is also supported by the University of Palermo (FFR2026, UNIPA BsD). The third author is also a postdoctoral researcher of the Fonds de la Recherche Scientifique--FNRS. The fourth author is supported by the National Research, Development and Innovation Fund of the Ministry for Innovation and Technology of Hungary under grant SNN 152582. 

\printbibliography

@book{Dales,
	title =     {Banach Algebras and Automatic Continuity},
	author =    {Dales, H. G.},
	publisher = {Oxford University Press, USA},
	isbn =      {9780198500131},
	year =      {2001},
	series =    {London Mathematical Society Monographs New Series}}

@book{Rørdam_Larsen_Laustsen_2000, 
place={Cambridge}, 
series={London Mathematical Society Student Texts}, 
title={An Introduction to K-Theory for C*-Algebras}, 
publisher={Cambridge University Press}, author={Rørdam, M. and Larsen, F. and Laustsen, N.}, 
year={2000}, 
collection={London Mathematical Society Student Texts}}

@article {molnar2021,
    AUTHOR = {Moln\'ar, L.},
     TITLE = {Maps on positive definite cones of {$C^*$}-algebras preserving the {W}asserstein mean},
   JOURNAL = {Proceedings of the American Mathematical Society},
  FJOURNAL = {Proceedings of the American Mathematical Society},
    VOLUME = {150},
      YEAR = {2022},
    NUMBER = {3},
     PAGES = {1209--1221},
   MRCLASS = {47A64 (46L40 47B49)},
  MRNUMBER = {4375715},
MRREVIEWER = {Tomohiro\ Hayashi},
       DOI = {10.1090/proc/15725}
}

@article {MR0087914,
	AUTHOR = {Kleinecke, D. C.},
	TITLE = {On operator commutators},
	JOURNAL = {Proceedings of the American Mathematical Society},
	FJOURNAL = {Proceedings of the American Mathematical Society},
	VOLUME = {8},
	YEAR = {1957},
	PAGES = {535--536},
	MRCLASS = {46.2X},
	MRNUMBER = {87914},
	MRREVIEWER = {C.\ R.\ Putnam},
	DOI = {10.2307/2033511}
}

@article{Shi56,
	author = {Shirokov, F. V.},
	title = {Proof of a conjecture of Kaplansky},
	journal = {Uspekhi Matematicheskikh Nauk},
	year = {1956},
	volume = {11},
	number = {4(70)},
	pages = {167--168}
}

@book{MacLane,
	author = {{Mac\,Lane}, S.},
	edition = {Second Edition},
	publisher = {Springer},
	series = {Graduate Texts in Mathematics},
	title = {Categories for the Working Mathematician},
	volume = {5},
	year = {1998},
    isbn = {978-1-4757-4721-8},
	doi={10.1007/978-1-4757-4721-8}
	}

@article {MR4091410,
    AUTHOR = {Kim, S. and Lee, H.},
     TITLE = {Inequalities of the {W}asserstein mean with other matrix
              means},
   JOURNAL = {Annals of Functional Analysis},
  FJOURNAL = {Annals of Functional Analysis},
    VOLUME = {11},
      YEAR = {2020},
    NUMBER = {1},
     PAGES = {194--207},
      
   MRCLASS = {15B48 (15A45 47B65)},
  MRNUMBER = {4091410},
MRREVIEWER = {Tin-Yau Tam},
       DOI = {10.1007/s43034-019-00025-0}
}

@article{Bhatia_2018,
author = {Bhatia, R. and Jain, T. and Lim, Y.},
title = {Strong convexity of sandwiched entropies and related optimization problems},
journal = {Reviews in Mathematical Physics},
volume = {30},
number = {09},
pages = {1850014},
year = {2018},
doi = {10.1142/S0129055X18500149}
}

@article {MR3958140,
    AUTHOR = {Bhatia, R. and Jain, T. and Lim, Y.},
     TITLE = {Inequalities for the {W}asserstein mean of positive definite
              matrices},
   JOURNAL = {Linear Algebra and its Applications},
  FJOURNAL = {Linear Algebra and its Applications},
    VOLUME = {576},
    number = {6},
      YEAR = {2019},
     PAGES = {108--123},
      ISSN = {0024-3795},
   MRCLASS = {15A42 (15A18 47A30 47A64)},
  MRNUMBER = {3958140},
MRREVIEWER = {Mostafa Hayajneh},
       DOI = {10.1016/j.laa.2018.03.017}
}

@article {MR3992484,
    AUTHOR = {Bhatia, R. and Jain, T. and Lim, Y.},
     TITLE = {On the {B}ures-{W}asserstein distance between positive
              definite matrices},
   JOURNAL = {Expositiones Mathematicae},
  FJOURNAL = {Expositiones Mathematicae},
    VOLUME = {37},
      YEAR = {2019},
    NUMBER = {2},
     PAGES = {165--191},
      
   MRCLASS = {47A63 (47A64 53C20 53C22 60B05 81P45)},
  MRNUMBER = {3992484},
MRREVIEWER = {Rute Lemos},
       DOI = {10.1016/j.exmath.2018.01.002}
}

@article {MR1125815,
    AUTHOR = {Hofmann, K. H. and Strambach, K.},
     TITLE = {Topological and analytic loops},
 journal = {Quasigroups and loops: theory and applications},
    SERIES = {Sigma Series in Pure Mathematics},
    VOLUME = {8},
     PAGES = {205--262},
 PUBLISHER = {Heldermann, Berlin},
      YEAR = {1990},
   MRCLASS = {22A30 (20N05)},
  MRNUMBER = {1125815},
}

@article {MR0262413,
    AUTHOR = {Akivis, M. A.},
     TITLE = {The canonical expansions of the equations of a local analytic quasigroup},
   JOURNAL = {Doklady Akademii Nauk SSSR},
  FJOURNAL = {Doklady Akademii Nauk SSSR},
    VOLUME = {188},
      YEAR = {1969},
      number = {5},
     PAGES = {967--970},
    
   MRCLASS = {22.05},
  MRNUMBER = {0262413},
MRREVIEWER = {R. M. Daci\'{c}},
}

@book {MR0093552,
    AUTHOR = {Bruck, R. H.},
     TITLE = {A Survey of Binary Systems},
    SERIES = {Ergebnisse der Mathematik und ihrer Grenzgebiete, Neue Folge,
              Heft 20},
               PUBLISHER = {Springer Verlag, Berlin-G\"{o}ttingen-Heidelberg},
              YEAR = {1958},
             PAGES = {viii+185}
}

@book {MR1741419,
    AUTHOR = {Schaefer, H. H. and Wolff, M. P.},
     TITLE = {Topological Vector Spaces},
    SERIES = {Graduate Texts in Mathematics},
    VOLUME = {3},
   EDITION = {Second Edition},
 PUBLISHER = {Springer-Verlag, New York},
      YEAR = {1999},
     PAGES = {xii+346},
      ISBN = {0-387-98726-6},
       DOI = {10.1007/978-1-4612-1468-7}
}

@article{AndersonTrapp,
author = {Anderson, W. N.  and Trapp, G. E.},
title = {Operator means and electrical networks},
journal = {Proceedings 1980 IEEE International Symposium on Circuits and Systems},
year = {1980},
volume = {1},
pages = {523-527}
}

@book {MR1899153,
    AUTHOR = {Kiechle, H.},
     TITLE = {Theory of {$K$}-loops},
    SERIES = {Lecture Notes in Mathematics},
    VOLUME = {1778},
 PUBLISHER = {Springer-Verlag, Berlin},
      YEAR = {2002},
     PAGES = {x+186},
      ISBN = {3-540-43262-0},
       DOI = {10.1007/b83276}
}

@article {MR1780879,
    AUTHOR = {Sabinin, L. V.},
     TITLE = {Smooth quasigroups and loops: forty-five years of incredible
              growth},
   JOURNAL = {Commentationes Mathematicae Universitatis Carolinae},
  FJOURNAL = {Commentationes Mathematicae Universitatis Carolinae},
    VOLUME = {41},
      YEAR = {2000},
    NUMBER = {2},
     PAGES = {377--400},
   MRCLASS = {20N05 (53A60 53C22)},
  MRNUMBER = {1780879},
MRREVIEWER = {Huberta Lausch}}

\end{document}